\documentclass[a4paper,11pt]{article}
\usepackage{latexsym}
\usepackage{amssymb}
\usepackage{enumerate}
\usepackage{enumitem}
\usepackage{xcolor}
\usepackage{commath}
\usepackage{array}
\usepackage{amsfonts}
\usepackage{amsmath,amsthm}
\usepackage{hyperref}
\usepackage{algorithm}
\usepackage{algorithmic}
\usepackage{framed}
\usepackage{boites}
\usepackage{graphicx}
\usepackage{cases}

 \textheight=21.5cm \textwidth=15cm
 \topmargin=-0.8cm
 \oddsidemargin=0.3cm \evensidemargin=0.3cm

\newenvironment{@abssec}[1]{%
    \if@twocolumn

      \section*{#1}%
    \else

      \vspace{.05in}\footnotesize
      \parindent .2in
 {\upshape\bfseries #1. }\ignorespaces
    \fi}

    {\if@twocolumn\else\par\vspace{.1in}\fi}

\newenvironment{keywords}{\begin{@abssec}{\keywordsname}}{\end{@abssec}}

\newenvironment{AMS}{\begin{@abssec}{\AMSname}}{\end{@abssec}}

\newcommand\keywordsname{Key words}
\newcommand\AMSname{AMS subject classifications}
\newcommand\AMname{AMS subject classification}
\newcommand\restr[2]{{
\left.\kern-\nulldelimiterspace 
#1 
\vphantom{|} 
\right|_{#2} 
}}
\newtheorem{theorem}{Theorem}[section]
\newtheorem{lemma}[theorem]{Lemma}
\newtheorem{corollary}[theorem]{Corollary}
\newtheorem{proposition}[theorem]{Proposition}
\newtheorem{remark}[theorem]{Remark}

\newtheorem{problem}{Problem}

\newtheorem{test}{Theorem}

\newtheorem{thm}{Theorem}

\newcommand{\RR}{\mathbb{R}}

\renewcommand{\SS}{\mathbb{S}}

\def\XXint#1#2#3{{\setbox0=\hbox{$#1{#2#3}{\int}$}
\vcenter{\hbox{$#2#3$}}\kern-.5\wd0}}

\newcommand{\supp}{\mathop{\mathrm{supp}}}

\newcommand{\dist}{\mathop{\mathrm{dist}}}  
\newcommand{\link}{\mathop{\circ\kern-.35em -}}

\newcommand{\ol}{\overline}
\newcommand{\pa}{\partial}

\newcommand{\dv}{\mathop{\mathrm{div}}}

\newcommand{\gr}{\nabla}
\newcommand{\sn}{<}

\newcommand{\al}{\alpha}
\newcommand{\be}{\beta}
\newcommand{\ga}{\gamma}  
\newcommand{\Ga}{\Gamma}
\newcommand{\de}{\delta}
\newcommand{\De}{\Delta}
\newcommand{\ve}{\varepsilon}
 
\newcommand{\la}{\lambda}
\newcommand{\La}{\Lambda}

\newcommand{\Si}{\Sigma}

\newcommand{\om}{\omega}
\newcommand{\Om}{\Omega}
\newcommand{\rn}{{\mathbb{R}}^N}
\newcommand{\sg}{\sigma}

\newcommand\setbld[2]{\left\{ #1 \; :\; #2\right\}}

\newcommand{\tin}{{\text{in }}}
\newcommand{\ton}{{\text{on }}}

\newcommand{\tfor}{{\text{for }}}
\newcommand{\id}{{\rm Id}}
\newcommand\pp[1]{\left( #1\right)}
\newcommand{\inv}{^{-1}}
\newcommand{\cdottone}{{\boldsymbol{\cdot}}}

\newcommand{\cA}{\mathcal{A}}
\newcommand{\C}{\mathcal{C}}
\newcommand{\cC}{\mathcal{C}}

\newcommand{\cF}{\mathcal{F}}
\newcommand{\cG}{{\mathcal G}}
\newcommand{\cH}{{\mathcal H}}

\newcommand{\cX}{\mathcal{X}}

\title{The simultaneous asymmetric perturbation method\\ for overdetermined free boundary problems
\thanks{This research was partially supported by the 
Grant-in-Aid for Research Activity Start-up (No. 20K22298) of the Japan Society for the Promotion of Science.}
}

\author{Lorenzo Cavallina 
}
\date{}

\begin{document}

\maketitle

\begin{abstract}
In this paper, we introduce a new method for applying the implicit function theorem to find nontrivial solutions to overdetermined problems with a fixed boundary (given) and a free boundary (to be determined). The novelty of this method lies in the kind of perturbations considered. Indeed, we work with perturbations that exhibit different levels of regularity on each boundary. This allows us to construct solutions that would have been out of reach otherwise. Another benefit of this method lies in the improvement of the regularity gap between the free boundary and the given one. Finally, some geometric properties of the solutions, such as symmetry and convexity, are also discussed.  
\end{abstract}

\begin{keywords}
two-phase, overdetermined problem, free boundary problem, shape derivatives, implicit function theorem. 
\end{keywords}

\begin{AMS}
35N25, 35J15, 35Q93
\end{AMS}

\pagestyle{plain}
\thispagestyle{plain}

\section{Introduction}\label{introduction}
\subsection{Problem setting and known results}
Let $\Om$ be a smooth bounded domain of $\rn$ ($N\ge2$) and $D\subset\ol D \subset \Om$ be a subdomain with Lipschitz continuous boundary $\pa D$. For simplicity, we will require that both boundaries $\pa D$ and $\pa\Om$ are connected. 
Moreover, let $n$ denote the outward unit normal vector to either  
$\pa\Om$ or $\pa D$ depending on the context.
In what follows, we consider various free boundary problems that satisfy the following assumptions: 
\begin{itemize}
\item One of the two boundaries (say $\pa D$) is given, while the other (say $\pa \Om$) is the free boundary to be determined.
\item The solution of a certain boundary value problem (depending on $\Om$ and $D$) satisfies a given overdetermined condition on the free boundary.
\item The pair $(\pa D_0,\pa \Om_0)$ is some known solution to the free boundary problem (\emph{trivial solution}).
\end{itemize}

Undoubtedly, one of the most famous examples of free boundary problems that satisfy the properties above is the following Bernoulli overdetermined problem.
\begin{problem}\label{pb 1}
Find a pair $(D,\Om)$ such that the following overdetermined problem admits a solution for some real parameter $c$.
\begin{equation}\label{bernu}
    \begin{cases}
    -\De u = 0 \quad \text{in }\Om\setminus \ol D,\\
    u=1 \quad \text{on }\pa D,\\
    u=0 \quad \text{on }\pa \Om,\\
    \pa_n u = c \quad \text{on }\pa \Om.
    \end{cases}
\end{equation}
Here $\pa_n$ stands for the outward normal derivative at the boundary.
\end{problem}
Clearly, \eqref{bernu} is solvable if $(D,\Om)$ is a pair of concentric balls (trivial solution). Existence, regularity and qualitative properties of the solutions of \eqref{bernu} have been studied for a long time and a plethora of different approaches is known. As far as the study of the properties of single solutions is concerned, we refer to \cite{beurling, altcaff, HS97} and the references therein. Furthermore, as far as the study of families of solutions is concerned, we refer to \cite{acker, henrot onodera} and the references therein. 

Another example of a free boundary problem that fits the description above is given by the two-phase overdetermined problem of Serrin type. 
Given a positive constant $\sg_c\ne 1$, define the following piece-wise constant function: 
\begin{equation}\label{sigma}
    \sg=\sg_c\ \cX_D+\cX_{\Om\setminus D},
\end{equation}
where $\cX_A$ is the characteristic function of the set $A$ (i.e., $\cX_A(x)$ is $1$ if $x\in A$ and $0$ otherwise).
\begin{problem}\label{pb 2}
Find a pair $(D,\Om)$ such that the following overdetermined problem admits a solution for some real parameter $c$.
\begin{equation}\label{2ph}
    \begin{cases}
    -\dv\left(\sg\gr u\right)=1 \quad \text{in }\Om, \\
    u=0\quad \text{on }\pa\Om,\\
    \pa_n u=c \quad \text{on }\pa\Om.
    \end{cases}
\end{equation}
\end{problem}

This problem was first studied by Serrin \cite{Se1971} in the particular case $D=\emptyset$. He showed that, if $D=\emptyset$, Problem \ref{pb 2} admits a solution if and only if $\Om$ is a ball. 
Just like Problem \ref{pb 1}, \eqref{2ph} is solvable if $(D,\Om)$ is a pair of concentric balls (trivial solution). Nontrivial solutions of Problem 2 have been studied only in recent years (see \cite{CY1, CYisaac} for the local behavior of nontrivial solutions near the trivial ones) and still very little is known about them in the general case. 

\subsection{The simultaneous asymmetric perturbation method}
In what follows, we will briefly describe the ideas behind the simultaneous asymmetric perturbation (SAP) method.
For simplicity let $(D_0,\Om_0)$ denote the pair of concentric balls centered at the origin with radii $R$ and $1$ respectively ($0\sn R\sn 1$). Now, let $\cF$ and $\cG$ be two suitable Banach spaces to be defined later. 
For sufficiently small $f\in\cF$ and $g\in\cG$, we introduce the perturbed domains $D_f$ and $\Om_g$ (see \eqref{D_f Om_g} for the precise definition)

\begin{figure}[h]
\centering
\includegraphics[width=0.45\linewidth]{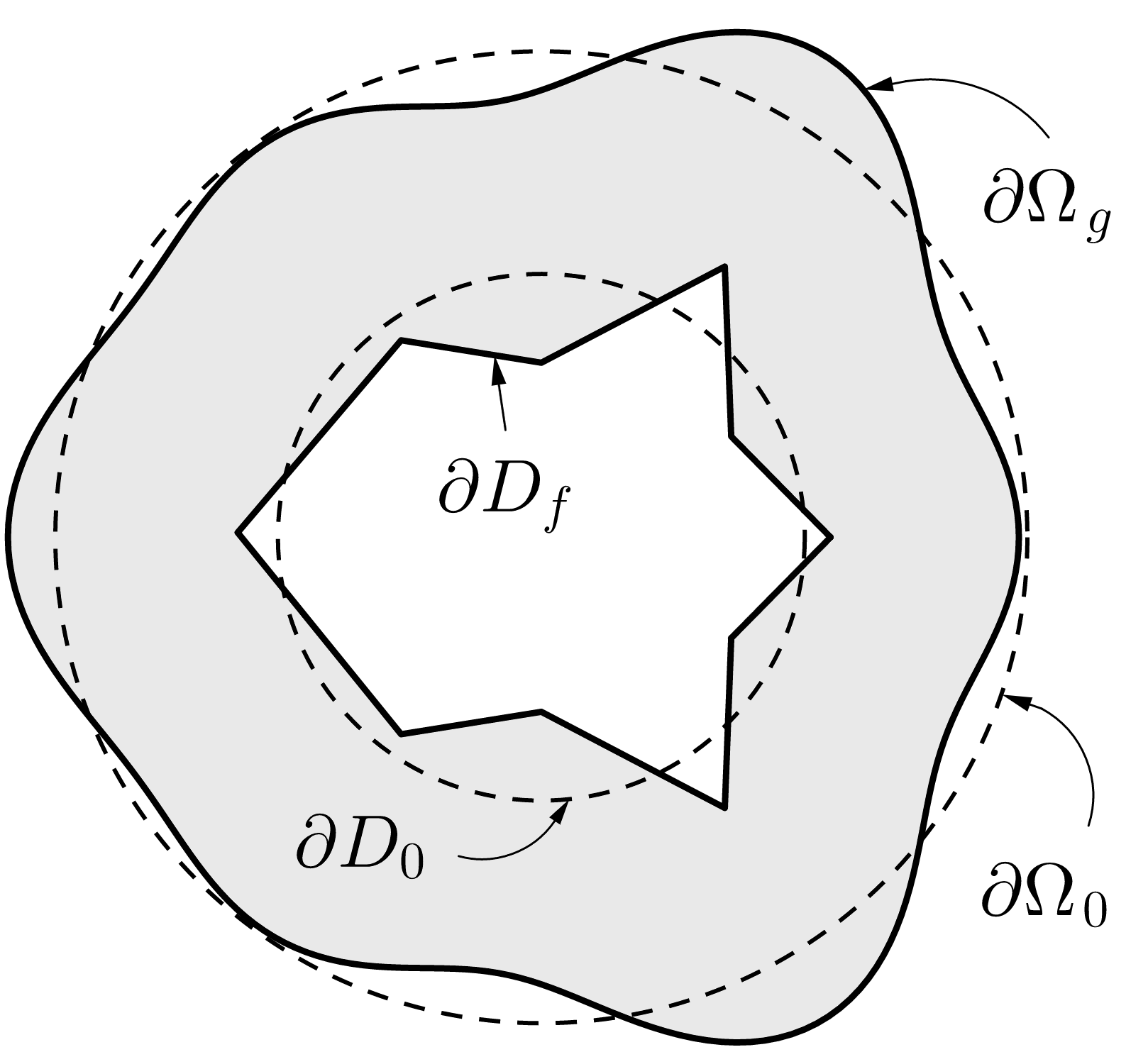}
\caption{Geometrical setting of the simultaneous asymmetric perturbation method.} 
\label{nonlinear}
\end{figure}

The first step to apply the SAP method consists in finding another Banach space $\cH$ and a mapping 
\begin{equation*}
    \Psi: \cF\times\cG\longrightarrow\cH
\end{equation*}
such that $\Psi(f,g)=0$ if and only if the pair $(D_f,\Om_g)$ solves the given overdetermined problem. The second step consists in applying the following version of the implicit function theorem (\cite{AP1983}) for Banach spaces to $\Psi$.  
\begin{thm}[Implicit function theorem]\label{ift}
Let $\Psi\in\cC^k(\La\times W,\cH)$, $k\ge1$, where $\cH$ is a Banach space and $\La$ (resp. $W$) is an open set of a Banach space $\cF$ (resp. $\cG$). Suppose that  $\Psi(f^*,g^*)=0$ and that the partial derivative $\pa_g\Psi(f^*,g^*)$ is a bounded invertible linear transformation from $\cG$ to $\cH$. 

Then there exist neighborhoods $\Theta$ of $f^*$ in $\cF$ and $W^*$ of $g^*$ in $\cG$, and a map $g\in\cC^k(\Theta,\cG)$ such that the following hold:
\begin{enumerate}[label=(\roman*)]
\item $\Psi(f,\widetilde g(f))=0$ for all $f\in\Theta$,
\item If $\Psi(f,g)=0$ for some $(f,g)\in\Theta\times W^*$, then $g=\widetilde g(f)$,
\item $(\widetilde g)'(f)=-[\pa_g \Psi(p) ]^{-1}\circ \pa_f \Psi(p)$, where $p=(f,\widetilde g(f))$ and $f\in\Theta$.
\end{enumerate}
\end{thm}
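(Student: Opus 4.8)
The plan is to reduce the solvability of $\Psi(f,g)=0$ near $(f^*,g^*)$ to a fixed-point problem and then apply the contraction mapping principle uniformly in $f$. Write $L:=\pa_g\Psi(f^*,g^*)\in\cL(\cG,\cH)$, which is invertible with bounded inverse $L\inv$ by hypothesis, and define
\begin{equation*}
T(f,g):=g-L\inv\Psi(f,g).
\end{equation*}
Since $L\inv$ is injective, $g$ is a fixed point of $T(f,\cdot)$ if and only if $\Psi(f,g)=0$. First I would compute $\pa_g T(f,g)=\id-L\inv\pa_g\Psi(f,g)$ and note that it vanishes at $(f^*,g^*)$; by continuity of $\pa_g\Psi$ (available since $\Psi\in\cC^k$ with $k\ge1$), there are a closed ball $\ol B(g^*,r)\subset W$ and a neighborhood of $f^*$ on which $\nr{\pa_g T(f,g)}\le 1/2$, so that $T(f,\cdot)$ is a $1/2$-contraction there.

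Next I would arrange that $T(f,\cdot)$ maps $\ol B(g^*,r)$ into itself. Since $T(f^*,g^*)=g^*$ and $\Psi$ is continuous in $f$, one can shrink the neighborhood of $f^*$ to an open set $\Theta$ on which $\nr{T(f,g^*)-g^*}\le r/2$; combined with the contraction estimate, for every $g\in\ol B(g^*,r)$ this gives
\begin{equation*}
\nr{T(f,g)-g^*}\le\nr{T(f,g)-T(f,g^*)}+\nr{T(f,g^*)-g^*}\le\tfrac12\nr{g-g^*}+\tfrac r2\le r.
\end{equation*}
The Banach fixed-point theorem then produces, for each $f\in\Theta$, a unique $\widetilde g(f)\in\ol B(g^*,r)$ with $T(f,\widetilde g(f))=\widetilde g(f)$, i.e.\ with $\Psi(f,\widetilde g(f))=0$. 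Taking $W^*$ to be the open ball $B(g^*,r)$ yields claims (i) and (ii).

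It remains to establish the regularity of $\widetilde g$, which I expect to be the main obstacle. Continuity follows from the uniform contraction together with the joint continuity of $\Psi$. For differentiability I would first observe that, since the invertible elements of $\cL(\cG,\cH)$ form an open set and $\pa_g\Psi$ is continuous, $\pa_g\Psi\pp{f,\widetilde g(f)}$ stays invertible after possibly shrinking $\Theta$. The candidate derivative is
\begin{equation*}
(\widetilde g)'(f)=-\left[\pa_g\Psi(p)\right]\inv\circ\pa_f\Psi(p),\qquad p=(f,\widetilde g(f)),
\end{equation*}
and I would verify that it is indeed the Fréchet derivative by estimating the remainder in the identity $\Psi(f+h,\widetilde g(f+h))-\Psi(f,\widetilde g(f))=0$, using the differentiability of $\Psi$ and the local boundedness of $[\pa_g\Psi(p)]\inv$ just obtained. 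Formula (iii) is exactly this expression, which also falls out of the chain rule applied to $\Psi(f,\widetilde g(f))\equiv0$. Finally, $\cC^k$ regularity follows by bootstrapping: the displayed formula writes $(\widetilde g)'$ as a composition of $\widetilde g$, the derivatives of $\Psi$, and the operator-inversion map $A\mapsto A\inv$, which is $\cC^\infty$ on invertible operators, so an induction on the order of differentiation upgrades $\widetilde g\in\cC^0$ to $\widetilde g\in\cC^k$. The two delicate points are the remainder estimate yielding Fréchet differentiability and the verification that each bootstrap step remains within the class of invertible operators; both are standard but require care.
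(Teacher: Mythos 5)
Your proof is correct: the reduction to a fixed point of $T(f,g)=g-L\inv\Psi(f,g)$, the uniform contraction estimate, and the bootstrap from the derivative formula $(\widetilde g)'(f)=-[\pa_g\Psi(p)]\inv\circ\pa_f\Psi(p)$ constitute the standard and complete argument. Note that the paper does not prove this theorem at all --- it quotes it from the cited reference [AP1983] (Ambrosetti--Prodi), whose proof is essentially the same contraction-mapping argument you give, so nothing further is needed.
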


The essence of the SAP method relies upon the fine-tuned choice of the Banach spaces $\cF$, $\cG$ and $\cH$ so that
\begin{itemize}
    \item $\cF$ is as large as possible. That is, functions in $\cF$ enjoy the lowest regularity possible.
    \item $\cG$ is as small as possible. That is, functions in $\cG$ enjoy the highest regularity possible.
    \item The map $\Psi:\cF\times\cG\to\cH$ satisfies the hypotheses of the implicit function theorem in a neighborhood of $(0,0)\in\cF\times\cG$. In particular $\Psi$ has to be Fr\'echet differentiable \textbf{jointly} in the variables $f$ and $g$.
\end{itemize}

The most technical aspect of this method lies in showing the Fr\'echet differentiability of $\Psi$ and in performing the resulting computations. This can be done by employing the theory of shape derivatives, some mathematical machinery for computing derivatives of shape functionals (such as $\Psi$) with respect to geometric perturbations of the boundary (see \cite{SG, HP2018}). The novelty of this paper lies in the kind of perturbations considered. Indeed, we will examine the case of perturbations that are both \textbf{simultaneous} (in $f$ and $g$, that is, perturbing both $\pa D_0$ and $\pa \Om_0$ at the same time) and \textbf{asymmetric} (that is $f$ and $g$ enjoy different regularities). To the best of my knowledge, I am not aware of any other work in the literature where it is meaningful to consider shape derivatives with respect to simultaneous asymmetric perturbations of two boundaries. 

\subsection{Main results}
Let $B$ denote an open ball centered at the origin such that $\ol D_0\subset B\subset \ol B\subset \Om_0$. 
Moreover, for an integer $m\ge 2$ and a real number $\al\in (0,1)$, consider the following Banach spaces endowed with the usual norms (that will be simply denoted by $\norm{\cdottone}$):
\begin{equation}\label{FGH}
\begin{aligned}
    &\cF:=\left\{f\in\cC^{0,1}(B,\rn)\;:\; f\equiv 0 \quad \ton \pa B\right\},\quad 
    \cG:=\left\{g\in\cC^{m,\al}(\pa \Om_0, \RR)\;:\; \int_{\pa \Om_0} g =0\right\}, \\ 
    &\cH:=\left\{h\in\cC^{m-1,\al}(\pa \Om_0, \RR)\;:\; \int_{\pa \Om_0} h =0\right\}.
\end{aligned}
\end{equation}

Now, for small $(f,g)\in\cF\times\cG$ let $\varphi_{f,g}\in W^{1,\infty}(\rn,\rn)$ be a map such that ${\rm Id}+ \varphi_{f,g}:\rn\to\rn$ is a diffeomorphism (here $\rm Id$ denotes the identity mapping of $\rn$) and
\begin{equation}\label{extending perturbations}
    \varphi_{f,g}= f \;\tin B, \quad \varphi_{f,g}=gn \;\ton \pa\Om_0.   
    \end{equation}
Finally, for $(f,g)\in\cF\times\cG$ sufficiently small, set
\begin{equation}\label{D_f Om_g}
D_f:=({\rm Id}+\varphi_{f,g})(D_0), \quad 
\Om_g:=  ({\rm Id}+\varphi_{f,g})(\Om_0).
\end{equation}

\begin{test}\label{main thm 1}
There exists a threshold $\ve>0$ such that, for all $f\in\cF$ satisfying $\norm{f}<\ve$ there exists a function $g=g(f)\in\cG$ such that the pair $(D_f,\Om_g)$ is a solution to problem \eqref{bernu} for some $c\in\RR$. Moreover, this solution is unique in a small enough neighborhood of $(0,0)\in\cF\times\cG$.
\end{test}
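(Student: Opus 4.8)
The plan is to recast Problem \ref{pb 1} as an abstract scalar equation on the fixed reference configuration and solve it for $g$ in terms of $f$ via the implicit function theorem (Theorem \ref{ift}). For small $(f,g)\in\cF\times\cG$ let $u_{f,g}$ be the unique solution of the well-posed mixed problem obtained by dropping the overdetermination in \eqref{bernu}, namely $\De u_{f,g}=0$ in $\Om_g\setminus\ol{D_f}$, $u_{f,g}=1$ on $\pa D_f$, and $u_{f,g}=0$ on $\pa\Om_g$. I would then define
\[ \Psi(f,g):=\left.\left[(\pa_n u_{f,g})\circ(\id+\varphi_{f,g})\right]\right|_{\pa\Om_0}-\dashint_{\pa\Om_0}(\pa_n u_{f,g})\circ(\id+\varphi_{f,g}). \]
Subtracting the mean forces the range into $\cH$, and since composition with the diffeomorphism $\id+\varphi_{f,g}$ preserves constancy, $\Psi(f,g)=0$ holds if and only if $\pa_n u_{f,g}$ is constant on $\pa\Om_g$, i.e. if and only if $(D_f,\Om_g)$ solves \eqref{bernu} with $c$ equal to that constant. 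At $(f,g)=(0,0)$ the domain is the concentric annulus, $u_{0,0}$ is radial, so $\pa_n u_{0,0}$ is already constant on $\pa\Om_0$ and $\Psi(0,0)=0$. It then remains to verify the two structural hypotheses of Theorem \ref{ift} at $(0,0)$.

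The main technical obstacle is proving that $\Psi$ is of class $\cC^1$ (in fact $\cC^k$) \emph{jointly} in $(f,g)$, since $f$ carries only Lipschitz regularity. The structural fact that makes the asymmetric choice of spaces work is that $\varphi_{f,g}$ equals the rough field $f$ only inside $B$, which is compactly contained away from $\pa\Om_0$, whereas near $\pa\Om_0$ one has $\varphi_{f,g}=gn$ with the high regularity of $g$. Pulling $u_{f,g}$ back to the fixed annulus $\Om_0\setminus\ol{D_0}$ via $\id+\varphi_{f,g}$ produces a divergence-form elliptic equation whose coefficients depend on the Jacobian of $\varphi_{f,g}$; these are merely Lipschitz on $B\setminus\ol{D_0}$ but as regular as $g$ in a fixed neighborhood of $\pa\Om_0$. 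By Lax--Milgram and the analytic dependence of the coefficients on $(Df,Dg)$ one obtains that $(f,g)\mapsto u_{f,g}$ is $\cC^1$ into $H^1$; then, because the quantity we read off lives on $\pa\Om_0$, where $f$ has no effect and the equation is homogeneous with smooth coefficients, interior-to-boundary Schauder estimates upgrade the Neumann trace to $\cC^{m-1,\alpha}(\pa\Om_0)$ with $\cC^1$ dependence on $(f,g)$. This smoothing across the gap between $\supp f$ and $\pa\Om_0$ is exactly what lets us tolerate the low regularity of $f$ while landing in $\cH$.

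To check that $\pa_g\Psi(0,0)\colon\cG\to\cH$ is an isomorphism I would linearize only in the outer boundary and diagonalize on $\pa\Om_0=\mathbb{S}^{N-1}$ by spherical harmonics. For a direction $g$ the shape derivative $u'$ of $u_{0,0}$ solves $\De u'=0$ in $\Om_0\setminus\ol{D_0}$, $u'=0$ on $\pa D_0$, and $u'=-g\,\pa_n u_{0,0}$ on $\pa\Om_0$, and a Hadamard-type computation (using $\pa^2_{nn}u_{0,0}=-(N-1)c_0$ with $c_0:=u_0'(1)<0$) gives, on the zero-mean subspace, $\pa_g\Psi(0,0)[g]=\pa_n u'-(N-1)\,c_0\,g$. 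Taking $g$ a spherical harmonic of degree $k\ge1$ and writing $u'=(A r^{k}+B r^{-(k+N-2)})\,g$, the two boundary conditions fix $A,B$ and a direct computation yields the eigenvalue
\[ \mu_k=-c_0\,\frac{(k+N-1)\,R^{-(2k+N-2)}+(k-1)}{R^{-(2k+N-2)}-1}. \]
Since $0<R<1$ every bracket is positive and $-c_0>0$, so $\mu_k>0$ for all $k\ge1$ and for every admissible radius, giving injectivity with no bifurcation; moreover $R^{-(2k+N-2)}\to\infty$ exponentially, so $\mu_k=|c_0|(k+N-1)$ up to an error that is exponentially small in $k$, and the map is a classical first-order elliptic Fourier multiplier on $\mathbb{S}^{N-1}$. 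Hence it is a bounded isomorphism of the zero-mean subspaces $\cC^{m,\alpha}\to\cC^{m-1,\alpha}$, matching precisely the one-derivative gap between $\cG$ and $\cH$.

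With $\Psi(0,0)=0$, $\Psi\in\cC^k$, and $\pa_g\Psi(0,0)$ invertible, Theorem \ref{ift} applied at $(f^*,g^*)=(0,0)$ produces a threshold $\ve>0$, a neighborhood, and a $\cC^k$ map $f\mapsto g(f)$ with $\Psi(f,g(f))=0$ for $\norm{f}<\ve$; unwinding the definition of $\Psi$, this says exactly that $(D_f,\Om_{g(f)})$ solves \eqref{bernu} for the corresponding constant $c$, and part (ii) of Theorem \ref{ift} yields uniqueness in a neighborhood of $(0,0)$. I expect the bottleneck to be the joint $\cC^1$ regularity of $\Psi$ across the regularity gap, together with the careful justification of the Hadamard formula and the curvature contribution to $\mu_k$; once those are in place, the positivity of the eigenvalues is an elementary verification.
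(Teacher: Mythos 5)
Your proposal is correct and its skeleton is the same as the paper's: the identical functional $\Psi$ (pulled-back normal derivative projected onto zero-mean functions), the same structural use of the separation between $\supp f$ and $\pa\Om_0$ to get joint differentiability despite $f$ being merely Lipschitz, the same linearized problem \eqref{bernu u' eq} for the shape derivative, and the same spherical-harmonic diagonalization. Your eigenvalue $\mu_k$ agrees exactly with the paper's $\be_k$ in \eqref{beta_k} (rewrite $R^{-(2k+N-2)}=R^{2-N-2k}$ and factor $R^k$ out of the paper's fraction), and your positivity and asymptotic statements are right.

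Where you genuinely depart from the paper is the step on which it spends most of its effort: surjectivity of $\pa_g\Psi(0,0):\cG\to\cH$. You assert that, since $\mu_k=\abs{c_0}(k+N-1)+O(\rho^k)$ with $0<\rho<1$, the map is a ``classical first-order elliptic Fourier multiplier'' and hence a bounded isomorphism of the zero-mean H\"older spaces. The conclusion is true, but as stated this is an appeal to pseudodifferential/multiplier theory on H\"older spaces rather than a proof; boundedness of the \emph{inverse} multiplier from $\cC^{m-1,\al}$ to $\cC^{m,\al}$ is precisely the issue. The paper proves it by hand: given $h_0=\sum\ga_{k,i}Y_{k,i}\in\cH$ it forms $g_0=\sum(\ga_{k,i}/\be_k)Y_{k,i}$, shows $g_0\in H^m$ from the Sobolev decay characterization and $\be_k\sim k$, and then upgrades to $\cC^{m,\al}$ via an elliptic bootstrap on the auxiliary problem \eqref{bernu u' eq also satisfies}, following Kamburov and Sciaraffia. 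Your decomposition can be turned into a complete alternative: the principal multiplier $k\mapsto\abs{c_0}(k+N-1)$ is, up to adding the identity, the Dirichlet-to-Neumann operator of the exterior unit ball, and its inverse is realized by solving an exterior Robin problem (harmonic in $\{\abs{x}>1\}$, decaying, with $-\pa_r v+v=h_0/\abs{c_0}$ on $\SS^{N-1}$), hence bounded $\cC^{m-1,\al}\to\cC^{m,\al}$ by Schauder estimates; the $O(\rho^k)$ remainder is smoothing and compact, so Fredholm theory plus your injectivity ($\mu_k>0$) concludes. Either route works; the paper's bootstrap is more self-contained, yours is shorter at the cost of heavier machinery. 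One further caution: your phrase ``first $\cC^1$ into $H^1$, then upgrade the Neumann trace by Schauder estimates'' cannot be taken literally, because differentiable dependence in a weak norm together with uniform bounds in a strong norm does not yield differentiable dependence in the strong norm; the paper avoids this by running the implicit function theorem for the pulled-back solution directly in the intersection space $X$ (simultaneous $H^1$ and $\cC^{m,\al}(K')$ norms), and your argument should be organized the same way.
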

Moreover, the asymptotic behavior of the function $g(f)$ above as $\norm{f}\to0$ is given by Corollary \ref{asymptotic thm 1}.

Let us define
\begin{equation*}
\begin{aligned}
s(k)&= \frac{k(N+k-1)-(N+k-2)(k-1)R^{2-N-2k}}{k( N+k-1)+k(k-1)R^{2-N-2k}} \text{ for }k = 1,2,\ldots,\\
\Sigma&=\setbld{s\in (0,\infty)}{s=s(k)
\,\text{ for some }k = 1,2,\ldots}.
\end{aligned}
\end{equation*}
\begin{test}\label{main thm 2}
Let $\sg_c\in(0,\infty)\setminus\Sigma$. Then, there exists a threshold $\ve>0$ such that, for all $f\in\cF$ satisfying $\norm{f}_{\cC^{0,1}}<\ve$, there exists a function $g=g(f,\sg_c)\in\cG$ such that the pair $(D_f,\Om_g)$ is a solution to problem \eqref{2ph} for some $c\in\RR$. Moreover, this solution is unique in a small enough neighborhood of $(0,0)\in\cF\times\cG$.
\end{test}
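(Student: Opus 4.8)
The plan is to realize Problem \ref{pb 2} as the zero set of a single nonlinear map $\Psi:\cF\times\cG\to\cH$ and then invoke Theorem \ref{ift} at $(f^*,g^*)=(0,0)$. For small $(f,g)\in\cF\times\cG$ let $u_{f,g}$ be the unique weak solution of the two-phase Dirichlet problem $-\dv(\sg\gr u)=1$ in $\Om_g$, $u=0$ on $\pa\Om_g$, with $\sg=\sg_c\,\cX_{D_f}+\cX_{\Om_g\setminus D_f}$; this is well posed by Lax--Milgram. The overdetermined condition $\pa_n u=c$ on $\pa\Om_g$ holds for some $c\in\RR$ if and only if $\pa_n u_{f,g}$ is constant on $\pa\Om_g$, so I define
\[
\Psi(f,g):=\big(\pa_n u_{f,g}\big)\circ(\id+\varphi_{f,g}) - \frac{1}{\abs{\pa\Om_0}}\int_{\pa\Om_0}\big(\pa_n u_{f,g}\big)\circ(\id+\varphi_{f,g})\,dS ,
\]
namely the Neumann trace pulled back to the fixed reference boundary $\pa\Om_0$ and normalized to have zero mean. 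With this choice $\Psi(f,g)=0$ is equivalent to $(D_f,\Om_g)$ solving \eqref{2ph}, the constant $c$ being recovered \emph{a posteriori} as the above mean. Since the trivial solution $u_{0,0}$ is radially symmetric, $\pa_n u_{0,0}$ is constant on $\pa\Om_0$ and hence $\Psi(0,0)=0$. The crucial structural point, and the reason the target space $\cH=\cC^{m-1,\al}$ is admissible despite $f$ being merely Lipschitz, is that $-\De u_{f,g}=1$ holds in the one-phase region between $\pa D_f$ and $\pa\Om_g$; by interior and boundary (Schauder) elliptic estimates $u_{f,g}$ is $\cC^{m,\al}$ up to $\pa\Om_g$ regardless of the low regularity of $\pa D_f$, so its Neumann trace indeed lies in $\cC^{m-1,\al}$.

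The main analytic step is to prove that $\Psi$ is continuously Fréchet differentiable jointly in $(f,g)$ near the origin, i.e.\ $\Psi\in\cC^1(\La\times W,\cH)$. Transporting the boundary value problem to the fixed domain $\Om_0$ through the diffeomorphism $\id+\varphi_{f,g}$ turns it into an elliptic problem on $\Om_0$ whose coefficients depend on $(f,g)$ only through $\varphi_{f,g}$ and whose transmission interface $\pa D_0$ is now fixed; one then differentiates using the theory of shape derivatives (\cite{SG,HP2018}). The asymmetry of the perturbation is essential here: because $g$ ranges in the high-regularity space $\cC^{m,\al}$ while $f$ is only Lipschitz, the transported problem retains enough regularity near $\pa\Om_0$ for the Neumann trace to be differentiated, and I expect this verification to be the most delicate part of the argument. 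I would reuse the differentiability machinery developed for Theorem \ref{main thm 1}, the only genuinely new ingredient being the flux-continuity (transmission) condition $\sg_c\pa_n u^-=\pa_n u^+$ across $\pa D_0$.

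It then remains to show that $\pa_g\Psi(0,0):\cG\to\cH$ is a bounded invertible linear map precisely when $\sg_c\notin\Si$. Fixing $f=0$ and perturbing $\pa\Om_0$ normally by $g$, the shape derivative $u'$ of $u_{0,0}$ solves $-\dv(\sg\gr u')=0$ in $\Om_0$ with the unchanged transmission conditions on $\pa D_0$ and the boundary datum $u'=-(\pa_n u_{0,0})\,g$ on $\pa\Om_0$; the linearized operator is $g\mapsto \pa_n u'+(\text{curvature terms})\,g$, projected onto zero mean. Since the trivial configuration is rotationally invariant, this operator commutes with the action of $O(N)$ and is diagonalized by the spherical harmonics $Y_k$: writing $g=Y_k$ and separating variables gives $u'=v_k(r)Y_k$ with $v_k(r)=a_k r^k$ in $D_0$ and $v_k(r)=b_k r^{k}+c_k r^{2-N-k}$ in $\Om_0\setminus\ol{D_0}$. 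Imposing continuity of $v_k$ and of $\sg v_k'$ at $r=R$ together with the Dirichlet datum at $r=1$ yields a $3\times3$ linear system for $(a_k,b_k,c_k)$; substituting the result into $\pa_g\Psi(0,0)[Y_k]=\la_k Y_k$ produces, after simplification, an explicit eigenvalue $\la_k$ that vanishes exactly when $\sg_c=s(k)$. The hypothesis $\sg_c\in(0,\infty)\setminus\Si$ therefore guarantees $\la_k\ne0$ for every $k\ge1$ (the mode $k=0$ being excluded by the zero-mean constraint); moreover, since $s(k)\to-1$ as $k\to\infty$, the factor $\sg_c-s(k)$ stays bounded away from zero while the Dirichlet-to-Neumann scaling makes $\abs{\la_k}$ grow linearly in $k$, so $\pa_g\Psi(0,0)$ is an isomorphism from $\cC^{m,\al}$ onto $\cC^{m-1,\al}$ on the zero-mean subspaces, with bounded inverse.

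With $\Psi(0,0)=0$, $\Psi\in\cC^1$, and $\pa_g\Psi(0,0)$ invertible, Theorem \ref{ift} furnishes neighborhoods of the origin and a $\cC^1$ map $f\mapsto g(f)$ with $\Psi(f,g(f))=0$, together with uniqueness in a neighborhood of $(0,0)\in\cF\times\cG$; reading off $c$ as the mean of $\pa_n u_{f,g(f)}$ gives exactly the assertion of Theorem \ref{main thm 2}. I expect the principal obstacle to be the joint Fréchet differentiability of $\Psi$ under the asymmetric (Lipschitz in $f$, Hölder in $g$) perturbations, and the verification that $\Psi$ genuinely takes values in the high-regularity space $\cH$; by contrast, the eigenvalue computation, though lengthy, should be a routine separation-of-variables argument once the linearized transmission problem is correctly set up.
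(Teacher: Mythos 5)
Your proposal follows the paper's own route essentially step by step: the same functional $\Psi$ from \eqref{Psi} (the Neumann trace pulled back to $\pa\Om_0$ and projected onto zero mean, with the constant $c$ recovered a posteriori), the same justification that $\Psi$ lands in $\cC^{m-1,\al}$ (near $\pa\Om_g$ the equation is single-phase, so Schauder theory applies regardless of the Lipschitz interface $\pa D_f$), the same deferral of the joint Fr\'echet differentiability to the fixed-domain transport machinery already built for Theorem \ref{main thm 1}, and the same spherical-harmonic diagonalization of $\pa_g\Psi(0,0)$ via the linearized transmission problem. Your vanishing condition is also correct: the $k$-th eigenvalue vanishes exactly when $\sg_c=s(k)$, the $k=0$ mode is removed by the zero-mean constraint, and the observation $s(k)\to-1$ correctly explains why, for $\sg_c\in(0,\infty)\setminus\Si$, the eigenvalues stay uniformly away from zero.

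There is, however, a genuine gap at the final functional-analytic step. From ``$\la_k\ne0$ and $\abs{\la_k}$ grows linearly in $k$'' you conclude that $\pa_g\Psi(0,0)$ is an isomorphism of the zero-mean subspace of $\cC^{m,\al}$ onto that of $\cC^{m-1,\al}$ with bounded inverse. That inference is not valid as stated: H\"older spaces, unlike the Sobolev scale, are not characterized by the decay of spherical-harmonic coefficients, so the eigenvalue asymptotics by themselves only give an isomorphism between zero-mean $H^m$ and $H^{m-1}$. The substantive point is surjectivity onto $\cC^{m-1,\al}$: given $h_0\in\cH$, the formal preimage $g_0=\sum_{k,i}\ga_{k,i}\la_k^{-1}Y_{k,i}$ is a priori only in $H^m(\pa\Om_0)$, and one must upgrade it to $\cC^{m,\al}(\pa\Om_0)$. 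The paper closes exactly this gap (Subsection \ref{ssection applying ift}, invoked verbatim in Section 4) with the regularity bootstrap of \cite{kamburov sciaraffia}: the shape derivative $u'_+$ associated with $g_0$ solves an auxiliary problem whose Neumann datum lies in $\cC^{m-1,\al}+H^{m}$; elliptic regularity, the trace relation $g_0=-(\pa_n u)^{-1}\restr{u'_+}{\pa\Om_0}$, and an induction along the Sobolev scale followed by Sobolev embedding yield $u'_+\in\cC^{m,\al}(\ol{\Om_0})$, hence $g_0\in\cC^{m,\al}$. (Alternatively one could try to argue that $\pa_g\Psi(0,0)$ differs from a multiple of the Dirichlet-to-Neumann operator by a smoothing operator and invoke pseudodifferential theory in H\"older--Zygmund classes, but that argument too would have to be made, not merely asserted via the phrase ``Dirichlet-to-Neumann scaling''.) Apart from this missing step, your proposal matches the paper's proof.
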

Moreover, the asymptotic behavior of the function $g(f)$ as $\norm{f}_{\cC^{0,1}}\to0$ is given by Corollary \ref{asymptotic thm 2}.

The local behavior of the solutions of Problem 2 when $\sg_c\in\Si$ has been carried out in \cite{CYisaac}. 

This paper is organized as follows. In Section 2 we give the basic definitions concerning shape derivatives. In Section 3 we prove Theorem \ref{main thm 1} through the implicit function theorem (Theorem \ref{ift}). Similarly, Section 4 is devoted to the proof of Theorem \ref{main thm 2}. In Section 5 we give some general remarks on some geometric properties of the solutions $(D_f,\Om_g)$ (namely regularity, symmetry and convexity). Finally, in the Appendix, we prove a technical lemma that is crucial for the SAP method.

\section{Preliminaries on shape derivatives}

In this section, we will introduce the concept of shape derivatives. 
Let us first introduce some basic notation. Let $\om\subset\rn$ be a smooth domain at which we will compute the derivative of a shape functional $J$ (to this end, we will require $J(\widetilde{\om})$ to be defined at least for all domains $\widetilde{\om}$ ``sufficiently close'' to the reference domain $\om$).
Let $\varphi_0:\ol \omega\to\rn$ be a sufficiently ``smooth" vector field. Let $\omega_t:=({\rm Id}+t \varphi_0)(\om)$. For $t>0$ small enough the perturbation of the identity ${\rm Id}+t\varphi_0:\ol\om\to\ol{\om_t}$ is a diffeomorphism. 
The shape derivative of $J$ at $\om$ with respect to the perturbation field $\varphi_0$ is then defined as
\begin{equation*}
J'(\om)(\varphi_0)=\lim_{t\to 0} \frac{J(\om_t)-J(\om)}{t}.
\end{equation*}
Of course, the definition above can be extended to functionals that take several domains as input as well.

The concept of shape derivative can be applied to shape functionals that take values in a general Banach space too. 
A fairly common example is given by a smoothly varying family of sufficiently ``smooth" real-valued functions $w_t$ defined on the set $\om_t$ (in many practical applications $w_t$ is the solution to some boundary value problem defined on the perturbed domain $\om_t$). Since each $w_t$ lives in a different domain $\om_t$, the shape derivative $w'$ has to be defined in an indirect way (see \cite{SG}), that is
\begin{equation}\label{f' by material deri}
w'=\dot w - \gr w\cdot \varphi_0,     
\end{equation}
where $\dot w$ is the so-called \emph{material derivative} of $w_t$, defined as
\begin{equation*}
\dot w= \restr{\frac{d}{dt}}{t=0} w_t\circ \left({\rm Id} + t\varphi_0\right).    
\end{equation*}
We remark that, under suitable regularity conditions, the value of the shape derivative $w'(x)$ at any point $x\in\om$ simply coincides with the derivative $\restr{\frac{d}{dt}}{t=0} w_t(x)$ (indeed, notice that $x\in\om_t$ for $t$ small enough).

\section{Proof of Theorem \ref{main thm 1}}
\subsection{Preliminaries}

Let $\cF$, $\cG$ be the Banach spaces defined in \eqref{FGH}. For $(f,g)\in\cF\times\cG$, let $u_{f,g}$ denote the solution to the following boundary value problem:
\begin{equation}\label{dirichlet bernu}
\begin{cases}
-\De u=0 \;\; \tin \Om_g\setminus \ol {D_f}, \\
u=1 \;\; \ton \pa D_f, \\
u=0 \;\; \ton \pa\Om_g.
\end{cases}
\end{equation}
An elementary calculation yields that, for $(f,g)=(0,0)$, the solution of \eqref{dirichlet bernu} is given by the following radial function: 
\begin{equation}\label{bernu0}
u(r)=  \begin{cases}\displaystyle
\frac{r^{2-N}-1}{R^{2-N}-1} \quad \text{for }N\ge3,\\[10pt]
\displaystyle \frac{\log r}{\log R} \quad \text{for }N=2.
\end{cases}
\end{equation}
Moreover, by the above, we have
\begin{equation}\label{normal derivatives on the boundary}
\pa_n u \equiv \frac{2-N}{R^{2-N}-1},\qquad \pa_{nn}u \equiv \frac{2-N}{R^{2-N}-1} \ (1-N) \quad \ton \pa\Om_0,
\end{equation}
where we employed the following convention:
\begin{equation}\label{convention}
\frac{2-N}{R^{2-N}-1}:= \frac{1}{\log R} \quad \tfor N=2.    
\end{equation}
As announced in the introduction, we will apply the implicit function theorem to the function:
\begin{equation}\label{Psi}
    \begin{aligned}
    \Psi: & \cF\times \cG \longrightarrow \cH\\
    &(f,g)\longmapsto \Pi_0 \left( \restr{\left(\pa_{n_g} u_{f,g}\right)}{\pa\Om_0}\right),
\end{aligned}
\end{equation}
where $\Pi_0:\cC^{m-1,\al}(\pa\Om_0)\to\cH$ is the projection operator defined by
\begin{equation*}
\Pi_0(\varphi)=\varphi-\frac{1}{|\pa\Om_0|}\int_{\pa\Om_0}\varphi
\end{equation*}
and $u_{f,g}$ is the solution of \eqref{dirichlet bernu}. Moreover, by a slight abuse of notation, here $\restr{\left(\pa_{n_g} u_{f,g}\right)}{\pa\Om_0}$ denotes the function of value
\begin{equation}\label{function of value}
\gr u_{f,g}\big(x+g(x)n(x)\big)\cdot   n_g\big(x+g(x)n(x)\big) \quad \tfor x\in\pa\Om_0.  \end{equation}
Finally, notice that, by the Hopf lemma and the boundary condition in \eqref{bernu},  we can rewrite \eqref{function of value} as 
\begin{equation}\label{function of value rewritten}
\restr{\left(\pa_{n_g} u_{f,g}\right)}{\pa\Om_0}(x) = -\left|\gr u_{f,g}\big(x+g(x)n(x)\big)\right| \quad \tfor x\in\pa\Om_0.    
\end{equation}

\subsection{Computing the shape derivative of $u_{f,g}$}
In this subsection, we will compute the explicit expression of the shape derivative of $u_{f,g}$ (the question of the shape differentiability of $u_{f,g}$ will be addressed by Remark \ref{shape differentiability u_fg} in the Appendix).
Let $\varphi_0:\ol{\Om_0}\to\rn$ satisfy \eqref{extending perturbations}.
Then, the shape derivative $u'$ can be characterized as the unique solution of the following boundary value problem (the proof is analogous to that of \cite[Theorem 5.3.1]{HP2018}).
\begin{equation}\label{bernu u' eq}
\begin{cases}
-\De u' = 0 \quad \text{in }\Om_0\setminus\ol D_0,\\
u'=-\pa_n u \ f_0\cdot n \quad \text{on } \pa D_0,\\
u'=-\pa_n u \ g_0 \quad \text{on } \pa \Om_0.
\end{cases}
\end{equation}
We remark that \eqref{bernu u' eq} depends on the perturbation field $\varphi_0$ only through its normal component $\varphi_0\cdot n$ on $\pa D_0\cup \pa\Om_0$ (this fact holds in general and is known as the \emph{structure theorem} for shape derivatives in the literature \cite{structure}).  
In what follows we will also make use of the following notation for partial shape derivatives.  Let $u'_-$ and $u'_+$ denote the solution to \eqref{bernu u' eq} corresponding to the pairs $(f_0, 0)$ and $(0,g_0)$ respectively. Notice that, by linearity, we have $u'=u'_-+u'_+$.

Let $\{Y_{k,i}\}_{k,i}$ ($k\in \{0,1,\dots\}$, $i\in\{1,2,\dots, d_k\}$) denote a maximal family of linearly independent solutions to the eigenvalue problem
\begin{equation}\label{sphardef}
-\De_{\tau} Y_{k,i}=\lambda_k Y_{k,i} \quad\textrm{ on }\SS^{N-1},
\end{equation}
were $\De_\tau$ stands for the Laplace--Beltrami operator on the unit sphere $\SS^{N-1}$. The $k$-th eigenvalue $\lambda_k=k(N+k-2)$ has multiplicity $d_k$. Moreover, we consider the normalization $\norm{Y_{k,i}}_{L^2(\SS^{N-1})}=1$.  The solutions to the eigenvalue problem above, usually referred to as \emph{spherical harmonics},  form a complete orthonormal system of $L^2(\SS^{N-1})$. Finally, notice that the eigenspace corresponding to the eigenvalue $\la_0=0$ is the 1-dimensional space of constant functions on $\SS^{N-1}$.

\begin{proposition}\label{prop expansion}
Let $(f_0,g_0)\in\cF\times\cG$ and assume that, for some real coefficients $\alpha_{k,i}^\pm$, the following expansions hold true in $L^2(\SS^{N-1})$:
\begin{equation}\label{h_in h_out exp}
f_0(R\theta)\cdot \theta =\sum_{k=0}^\infty\sum_{i=1}^{d_k}\al_{k,i}^- Y_{k,i}(\theta), \quad 
g_0(\theta)=\sum_{k=1}^\infty\sum_{i=1}^{d_k}\al_{k,i}^+ Y_{k,i}(\theta), \quad \theta\in\SS^{N-1}.
\end{equation} 
Then, the function $u'=u'_-+u'_+$, solution to \eqref{bernu u' eq}, admits the following explicit expression for $\theta\in\mathbb{S}^{N-1}$ and $r\in [R,1]$:
\begin{equation*}
    u'_\pm(r,\theta) = \displaystyle \sum_{k=0}^\infty\sum_{i=1}^{d_k} \left( A_k^\pm r^{2-N-k}+B_k^\pm r^k  \right) \al_k^\pm Y_{k,i}(\theta). 
\end{equation*}
The values of the coefficients $A_k^\pm$ and $B_k^\pm$ are given by 
\begin{equation*}
A_k^-=-B_k^-= 
\displaystyle \frac{2-N}{R^{2-N}-1}\frac{-R^{2-N}}{(R^{2-N-k}-R^k)}, 
\end{equation*}
\begin{equation*}
    A_k^+=\displaystyle\frac{-B_k^+}{R^{2-N-2k}}=
    \displaystyle\frac{2-N}{R^{2-N}-1}\frac{1}{(R^{2-N-2k}-1)},
\end{equation*}
where we made use of the convention \eqref{convention}
\end{proposition}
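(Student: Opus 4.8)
The plan is to solve the boundary value problem \eqref{bernu u' eq} explicitly by separation of variables, exploiting the radial symmetry of the unperturbed annulus $\Om_0\setminus\ol D_0 = \{R<|x|<1\}$. Since the domain is a spherical shell, I would expand the solution $u'$ in spherical harmonics and match the boundary data prescribed in \eqref{bernu u' eq}. First I would recall that harmonic functions on an annulus admit a complete expansion of the form $\sum_{k,i}(A_k r^{2-N-k}+B_k r^k)Y_{k,i}(\theta)$; indeed, writing $u'=\sum_{k,i}w_{k,i}(r)Y_{k,i}(\theta)$ and using $-\De u'=0$ together with $-\De_\tau Y_{k,i}=\la_k Y_{k,i}$ with $\la_k=k(N+k-2)$, the radial part $w_{k,i}$ solves the Euler equation $r^2 w''+(N-1)r w'-\la_k w=0$, whose two independent solutions are $r^{2-N-k}$ and $r^k$ (the convention \eqref{convention} handling the degenerate case $k=0$, $N=2$). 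This reduces the problem to determining the coefficients from the two boundary conditions.

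Next I would treat the two partial shape derivatives $u'_-$ and $u'_+$ separately, as introduced before the statement, since $u'=u'_-+u'_+$ by linearity. For $u'_-$ (the pair $(f_0,0)$), the boundary data are $u'_-=-\pa_n u\,(f_0\cdot n)$ on $\pa D_0=\{r=R\}$ and $u'_-=0$ on $\pa\Om_0=\{r=1\}$. On $\pa D_0$ the outer normal points inward radially, so $f_0\cdot n = -f_0(R\theta)\cdot\theta$, and substituting the expansion \eqref{h_in h_out exp} together with the constant value of $\pa_n u$ from \eqref{normal derivatives on the boundary} converts the boundary conditions into, for each mode $(k,i)$, the linear system
\begin{equation*}
A_k^- R^{2-N-k}+B_k^- R^k = \frac{2-N}{R^{2-N}-1}, \qquad A_k^- + B_k^- = 0.
\end{equation*}
Solving this $2\times 2$ system gives $A_k^-=-B_k^-$ and the stated value. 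Analogously, for $u'_+$ (the pair $(0,g_0)$) the data are $u'_+=0$ on $\pa D_0$ and $u'_+=-\pa_n u\,g_0$ on $\pa\Om_0$, where now on $\pa\Om_0$ the outward normal agrees with the radial direction and $\pa_n u$ is again the constant from \eqref{normal derivatives on the boundary}; matching modes yields
\begin{equation*}
A_k^+ R^{2-N-k}+B_k^+ R^k = 0, \qquad A_k^+ + B_k^+ = \frac{2-N}{R^{2-N}-1},
\end{equation*}
whose solution gives $A_k^+=-B_k^+/R^{2-N-2k}$ and the claimed closed form after dividing through. Note the sum for $g_0$ starts at $k=1$ because $g_0$ has zero average (it lies in $\cG$), which is consistent with the eigenspace for $\la_0$ being the constants.

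The computations themselves are routine linear algebra once the separation of variables is set up, so I expect the main conceptual obstacle to be twofold. First, one must justify that the formal spherical-harmonic expansion of $u'$ actually converges and represents the genuine solution of \eqref{bernu u' eq} in the appropriate function space, rather than merely being a formal $L^2$ series; this requires the boundary data to lie in a sufficiently regular class and an appeal to elliptic regularity on the annulus, which is where the regularity assumptions on $\cF$ and $\cG$ enter. Second, and more delicately, care must be taken with the sign conventions for the outward normal on the inner versus the outer boundary and with the degenerate case $N=2$, $k=0$, where the two radial solutions $r^{2-N-k}$ and $r^k$ coincide and must be replaced by $1$ and $\log r$; the convention \eqref{convention} is precisely designed to absorb this, but verifying that the stated formulas remain valid under this convention (in particular that no spurious singularities arise as $N\to 2$) is the point demanding the most attention.
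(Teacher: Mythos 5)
Your overall strategy is exactly the paper's: expand in spherical harmonics, reduce harmonicity to the Euler ODE $r^2 S'' + (N-1)rS' - \la_k S = 0$ with independent solutions $r^{2-N-k}$ and $r^k$, and fix $A_k^\pm,B_k^\pm$ from a $2\times 2$ system per mode. The gap is that both of your linear systems carry incorrect right-hand sides, so neither reproduces the stated coefficients. For $u'_+$ you dropped the minus sign in the boundary datum of \eqref{bernu u' eq}: since $u'_+ = -\pa_n u\, g_0$ on $\pa\Om_0$ and $\pa_n u \equiv \frac{2-N}{R^{2-N}-1}$ there, the mode-$(k,i)$ condition is $A_k^+ + B_k^+ = -\frac{2-N}{R^{2-N}-1}$ (this is precisely the system the paper solves), not $+\frac{2-N}{R^{2-N}-1}$ as you wrote; your system therefore yields exactly the negatives of the claimed $A_k^+$ and $B_k^+$.

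For $u'_-$ the error is more substantial: you took the value of $\pa_n u$ on $\pa D_0$ from \eqref{normal derivatives on the boundary}, but that display is valid only on $\pa\Om_0$. The normal derivative of the radial solution \eqref{bernu0} is not the same constant on the two spheres: differentiating gives $\pa_r u(R) = \frac{(2-N)R^{1-N}}{R^{2-N}-1}$, which differs from the outer value by the factor $R^{1-N}$. You also mix orientations: you declare $n$ to point toward the origin on $\pa D_0$ (so $f_0\cdot n = -f_0(R\theta)\cdot\theta$), yet pair it with a value of $\pa_n u$ computed with the opposite orientation; the product $\pa_n u\,(f_0\cdot n)$ in \eqref{bernu u' eq} is orientation-invariant only if both factors use the same normal. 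The net effect is that your right-hand side $+\frac{2-N}{R^{2-N}-1}$ has the wrong sign and carries no power of $R$ at all, so it cannot produce the $R$-dependent numerator in the stated $A_k^-$. The correct inner datum, with both factors referred to the outward normal of $D_0$, is $u'_-(R\theta) = -\frac{(2-N)R^{1-N}}{R^{2-N}-1}\, f_0(R\theta)\cdot\theta$. A concrete sanity check at $k=0$, $N=3$, $R=\tfrac12$: differentiating $u_\rho(r) = \frac{r^{-1}-1}{\rho^{-1}-1}$ with respect to the inner radius $\rho$ gives $u'_- = 4(r^{-1}-1)$, i.e.\ $A_0^-=4$, whereas your system gives $A_0^-=-1$. (This check also shows the datum must carry the factor $R^{1-N}$ rather than $R^{2-N}$, so when you redo the computation, reconcile the resulting power of $R$ carefully against the one printed in the statement.)
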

\begin{proof}
The proof is virtually identical to that of \cite[Section 4]{cava2018}. We will compute here the expression for $u'_+$ only, since the case of $u'_-$ is completely analogous. 
Let us pick arbitrary $k\in\{1,2,\dots\}$ and $i\in\{1,\dots, d_k\}$. We will use the method of separation of variables to find the solution of problem \eqref{bernu u' eq} in the particular case when $f_0=0$ on $\pa D_0$ and $g_0=Y_{k,i}$ on $\pa\Om_0$ and then the general case will be recovered by linearity.
We will be searching for solutions of the form $u'_+=u'_+(r,\theta)=S(r)Y_{k,i}(\theta)$ (where $r:=\abs{x}$ and $\theta:=x/\abs{x}$ for $x\ne 0$).
Using the well known decomposition formula for the Laplace operator into its radial and angular components (see for instance \cite[Proposition 5.4.12]{HP2018}), the equation $\De u'_+=0$ in $\Om_0\setminus\ol{D_0}$ can be rewritten as
$$
\pa_{rr}S(r)Y_{k,i}(\theta)+\frac{N-1}{r}\pa_r S(r)Y_{k,i}(\theta)+\frac{1}{r^2}S(r)\De_\tau Y_{k,i}(\theta)=0 \quad\text{for }r\in (R,1),\, \theta\in\SS^{N-1}.
$$
By \eqref{sphardef}, we get the following equation for $S$:
\begin{equation}\label{f}
\pa_{rr}S+\frac{N-1}{r}\pa_r S-\frac{\lambda_k}{r^2}S=0 \quad\text{in } (R,1).
\end{equation}
Since we know that $\la_k=k(k+N-2)$, it can be easily checked that any solution to the above consists of a linear combination of the following two independent solutions:
\begin{equation}\label{xieta}
S_{sing}(r):= r^{2-N-k}\ \quad \text{ and } \quad S_{reg}(r):= r^k. 
\end{equation}
Then, for some real constants $A_k^+$, $B_k^+$ we have
$$
S(r)= 
A_k^+r^{2-N-k}+B_k^+r^k \quad \text{for } r\in(R,1).
$$
The coefficients $A_k^+$ and $B_k^+$ can then be obtained by the boundary conditions of problem \eqref{bernu u' eq} by setting $f_0=0$. 
We get the following system:
\[
\begin{cases}
A_k^+ R^{2-N-2k} + B_k^+ =0, \\
A_k^+ + B_k^+ = -\frac{(2-N)}{R^{2-N}-1},
\end{cases}
\]
where we make use of the convention \eqref{convention} for $N=2$.
By solving it we obtain the coefficients of the series representation of $u'_+$.
\end{proof}
\subsection{Computing the Fr\'echet derivative of $\Psi$}

\begin{lemma}\label{Frechet diffbility of Psi}
The map $\Psi:\cF\times\cG\to \cH$ is Fr\'echet differentiable in a neighborhood of $(0,0)\in\cF\times\cG$.
\end{lemma}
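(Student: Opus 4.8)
The goal is to establish that the shape functional $\Psi$ defined in \eqref{Psi} is Fréchet differentiable jointly in $(f,g)$ near the origin. The plan is to decompose $\Psi$ into a composition of simpler maps and to verify that each is differentiable, then invoke the chain rule. Writing $T_g(x):=x+g(x)n(x)$ for the map tracing out $\pa\Om_g$ from $\pa\Om_0$, the functional factors as $(f,g)\mapsto u_{f,g}\mapsto \gr u_{f,g}\mapsto -|\gr u_{f,g}\circ T_g|\mapsto \Pi_0(\cdot)$, using the Hopf-lemma rewriting \eqref{function of value rewritten}. The projection $\Pi_0$ is bounded linear, hence trivially smooth, so it contributes nothing beyond a final composition.

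First I would establish the shape differentiability of the solution map $(f,g)\mapsto u_{f,g}$, which is the analytic heart of the matter; this is precisely the content deferred to Remark \ref{shape differentiability u_fg} in the Appendix, and its shape derivative $u'=u'_-+u'_+$ is already characterized by \eqref{bernu u' eq} and computed explicitly in Proposition \ref{prop expansion}. The standard route is to transport the problem back to the fixed annulus $\ann$ via the diffeomorphism ${\rm Id}+\varphi_{f,g}$: the pulled-back function $\tilde u_{f,g}:=u_{f,g}\circ({\rm Id}+\varphi_{f,g})$ solves a boundary value problem on the \emph{fixed} domain whose coefficients (a transported divergence-form operator with matrix built from $D\varphi_{f,g}$) depend analytically on $(f,g)$, while the boundary data are fixed. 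Because $\varphi_{f,g}$ is merely Lipschitz on the inner region (it equals $f\in\cC^{0,1}$ there) yet $\cC^{m,\al}$ near $\pa\Om_0$ (it equals $gn$), the transported coefficients are only $L^\infty$ in the interior, so I would invoke the implicit function theorem for the map sending $(f,g,\tilde u)$ to the transported equation, landing $\tilde u$ in a space like $H^1\cap\cC^{m,\al}(\text{near }\pa\Om_0)$; the differentiability of $(f,g)\mapsto\tilde u_{f,g}$ then follows, and unwinding \eqref{f' by material deri} recovers the shape derivative $u'$.

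Next I would handle the composition with the trace and the Neumann-type evaluation. The key point is the \textbf{asymmetry} built into the choice of spaces: since $g\in\cG=\cC^{m,\al}$ and the trace of $u_{f,g}$ near $\pa\Om_g$ is controlled in $\cC^{m,\al}$, the gradient $\gr u_{f,g}$ is $\cC^{m-1,\al}$ up to $\pa\Om_g$, so the pulled-back boundary gradient $x\mapsto\gr u_{f,g}(T_g(x))$ lands in $\cC^{m-1,\al}(\pa\Om_0,\rn)$, which is exactly where $\cH$ lives. The map $g\mapsto T_g$ is affine hence smooth into $\cC^{m,\al}$, and composition $v\mapsto v\circ T_g$ is differentiable in this Hölder scale; by the Hopf lemma $|\gr u_{f,g}|$ stays bounded away from zero near the origin, so the nonlinearity $w\mapsto|w|$ is smooth on the relevant open set. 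Chaining these differentiable maps and applying $\Pi_0$ yields the joint Fréchet differentiability of $\Psi$.

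The main obstacle, and what must be argued most carefully, is the first step: proving that $(f,g)\mapsto u_{f,g}$ is differentiable \emph{jointly} despite the two perturbations living in spaces of genuinely different regularity. The delicate issue is that the transported operator has only $L^\infty$ (not Hölder) coefficients in the interior region affected by $f$, yet we still need $\cC^{m,\al}$ control of the solution in a neighborhood of the \emph{outer} boundary $\pa\Om_0$, where the regularity of $\Psi$ is measured. This is resolved by interior elliptic regularity: since $\pa D_0$ and $\pa B$ are separated from $\pa\Om_0$ by a fixed collar, the low-regularity perturbation $f$ only acts well inside $B$, and in the annular region between $\pa B$ and $\pa\Om_g$ the equation is the plain Laplacian with smooth coefficients, so the solution enjoys full $\cC^{m,\al}$ regularity there by Schauder estimates. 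Thus the regularity needed for $\cH$ is decoupled from the low regularity of $f$, which is exactly the mechanism that makes the SAP method work and permits $\cF$ to be taken as large (as rough) as $\cC^{0,1}$.
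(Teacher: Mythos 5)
Your overall route is the same as the paper's: pull the state back to the fixed domain, $v_{f,g}:=u_{f,g}\circ(\id+\varphi_{f,g})$, prove joint smoothness of $(f,g)\mapsto v_{f,g}$ by applying the implicit function theorem to the transported equation in a space with asymmetric regularity ($H^1$ globally, $\cC^{m,\al}$ on a collar $K'$ of $\pa\Om_0$, harmonic in the buffer region where the perturbation vanishes), with the collar separation playing exactly the decoupling role you describe; this is the content of Lemma \ref{material differentiability bernu} and Lemma \ref{frechet diffbility of v_fg}. (The paper also must construct the extension $(f,g)\mapsto\varphi_{f,g}$ as a bounded linear map into the space $\cA$ of asymmetric fields, via harmonic extensions and a cutoff; you take this for granted, but it is a minor point.)

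The genuine gap is in your final chaining step, where you revert to the Eulerian gradient and assert that the composition operator $v\mapsto v\circ(\id+gn)$ ``is differentiable in this H\"older scale.'' It is not, at the regularity you actually have. Differentiating $g\mapsto \gr u_{f,g}\circ(\id+gn)$ produces the term $\bigl(D^2u_{f,g}\circ(\id+gn)\bigr)(g_0n)$; at a perturbed configuration $(f,g)\ne(0,0)$ the boundary $\pa\Om_g$ is only $\cC^{m,\al}$, so $u_{f,g}$ is only $\cC^{m,\al}$ up to $\pa\Om_g$, hence $D^2u_{f,g}$ is only $\cC^{m-2,\al}$ there, and the candidate derivative does not lie in $\cC^{m-1,\al}$: composition in H\"older spaces loses one derivative. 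Your chain therefore only yields Fr\'echet differentiability of $\Psi$ into a $\cC^{m-2,\al}$-type space, which is strictly weaker than $\cH$ and insufficient for Theorem \ref{ift}, since that theorem requires $\Psi\in\cC^1$ into $\cH$ on a whole neighborhood, not just at the origin (at the single point $(0,0)$ you are rescued by the analyticity of the radial solution near $\pa\Om_0$, but this does not propagate to nearby $(f,g)$). The fact that the full derivative nevertheless lands in $\cC^{m-1,\al}$ is a cancellation between $\gr u'\circ(\id+gn)$ and the $D^2u_{f,g}$-term, which your argument never captures. The paper sidesteps the issue by never composing Eulerian quantities with the moving map: by the chain rule one has the identity \eqref{change of variables}, expressing $\gr u_{f,g}\circ(\id+gn)$ on $\pa\Om_0$ as an invertible matrix built from $D\varphi_{f,g}$ applied to $\gr v_{f,g}$. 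Since $(f,g)\mapsto D\varphi_{f,g}$ is bounded linear into $\cC^{m-1,\al}(K')$ and $(f,g)\mapsto\gr v_{f,g}$ is smooth into $\cC^{m-1,\al}(K')$ by your own first step, $\Psi$ becomes an algebraic expression (matrix inversion, products, the norm $|\cdot|$ away from zero, and $\Pi_0$) in smoothly varying elements of the Banach algebra $\cC^{m-1,\al}$, and no derivative is lost. With that one substitution your proof closes; as written, the chaining step fails.
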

The proof of Lemma \ref{Frechet diffbility of Psi} is quite technical and will be postponed to the Appendix.

\begin{theorem}\label{Psi'=something}
The Fr\'echet derivative $\Psi'(0,0)$ defines a mapping from $\cF\times\cG$ to $\cH$ by the formula
\begin{equation*}
\Psi'(0,0)[f_0,g_0] = \pa_n u' + \pa_{nn}u \, g_0,
\end{equation*}
where $\pa_{nn}u = n\cdot \left(D^2 u\; n\right)$.
In particular, following the definition of $u'_\pm$ given right after \eqref{bernu u' eq}, we have the following expression for the partial Fr\'echet derivatives as well:
\begin{align}
\pa_f \Psi(0,0)[f_0] &= \pa_n u'_-, \label{deri f} \\
\pa_g \Psi(0,0)[g_0] &= \pa_n u'_+ + \pa_{nn} u\, g_0. \label{deri g} 
\end{align}
\end{theorem}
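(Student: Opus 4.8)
The plan is to lean on the differentiability already granted by Lemma~\ref{Frechet diffbility of Psi}: since $\Psi$ is known to be Fr\'echet differentiable at $(0,0)$, the derivative can be recovered from the directional derivatives $\Psi'(0,0)[f_0,g_0]=\frac{d}{dt}\big|_{t=0}\Psi(tf_0,tg_0)$. Because $\Pi_0$ is a bounded linear projection, it commutes with $\frac{d}{dt}$, so the entire problem reduces to differentiating the scalar boundary function
\[
v_t(x):=\gr u_{tf_0,tg_0}\big(\Phi_t(x)\big)\cdot n_{tg_0}\big(\Phi_t(x)\big),\qquad x\in\pa\Om_0,
\]
where $\Phi_t:=\id+t\varphi_0$, the field $\varphi_0$ satisfies \eqref{extending perturbations} with $(f,g)=(f_0,g_0)$ (so $\varphi_0=f_0$ on $\pa D_0$ and $\varphi_0=g_0 n$ on $\pa\Om_0$), and $n_{tg_0}$ is the outward normal of $\pa\Om_{tg_0}$; this is exactly the ``function of value'' in \eqref{function of value}. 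I would then conclude by $\Psi'(0,0)[f_0,g_0]=\Pi_0\big(\frac{d}{dt}\big|_{t=0}v_t\big)$.

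The computation is a product rule, $\frac{d}{dt}v_t=I+II$, where $I$ carries the variation of $\gr u_{tf_0,tg_0}(\Phi_t(\cdot))$ and $II$ that of the moving normal $n_{tg_0}(\Phi_t(\cdot))$. For $I$ I would apply the chain rule to $G(t,y):=\gr u_{tf_0,tg_0}(y)$ along $y=\Phi_t(x)$: at $t=0$ this gives $\gr u'+D^2u\,\varphi_0$, using the interior identity $\frac{d}{dt}\big|_{t=0}u_{tf_0,tg_0}(x)=u'(x)$ recorded after \eqref{f' by material deri} together with the characterization of $u'$ in \eqref{bernu u' eq}, and $\frac{d}{dt}\Phi_t=\varphi_0$. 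Dotting with $n$ and inserting $\varphi_0=g_0 n$ on $\pa\Om_0$ turns the Hessian term into $g_0\,n\cdot(D^2u\,n)=\pa_{nn}u\,g_0$, so that $I=\pa_n u'+\pa_{nn}u\,g_0$.

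The decisive simplification is that $II$ vanishes. On $\pa\Om_0$ the solution $u$ is constant (indeed $u\equiv0$), hence its tangential gradient is zero and $\gr u=(\pa_n u)\,n$ is purely normal; on the other hand $|n_{tg_0}(\Phi_t(x))|\equiv1$, so differentiating the constant squared norm yields $n\cdot\frac{d}{dt}\big|_{t=0}n_{tg_0}(\Phi_t(x))=0$, whence $II=(\pa_n u)\,n\cdot\frac{d}{dt}\big|_{t=0}n_{tg_0}(\Phi_t(x))=0$. This gives $\Psi'(0,0)[f_0,g_0]=\Pi_0(\pa_n u'+\pa_{nn}u\,g_0)$. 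The two partial derivatives then follow at once from the linear splitting $u'=u'_-+u'_+$ of \eqref{bernu u' eq}, by setting $g_0=0$ and $f_0=0$ respectively, yielding \eqref{deri f} and \eqref{deri g}; here the projection $\Pi_0$ acts as the identity on the $g$-derivative, since $\pa_{nn}u$ is constant with $\int_{\pa\Om_0}g_0=0$ and, by Proposition~\ref{prop expansion} and \eqref{h_in h_out exp}, $u'_+$ carries no $k=0$ spherical-harmonic mode so that $\int_{\pa\Om_0}\pa_n u'_+=0$, while on the $f$-derivative it only removes the (constant) mean of $\pa_n u'_-$.

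I expect the main obstacle to be not the bookkeeping but the justification of the two interchanges in term $I$: that $\frac{d}{dt}\big|_{t=0}u_{tf_0,tg_0}=u'$ and, crucially, that this identity survives one spatial differentiation \emph{up to and including} the boundary $\pa\Om_0$, even though $u'$ is a priori characterized only as an interior shape derivative. This demands the elliptic boundary regularity that renders $\gr u$ and $\gr u'$ continuous up to $\pa\Om_0$ and legitimizes evaluating $G(t,\cdot)$ and its $t$-derivative at the moving boundary point $\Phi_t(x)$ — precisely the estimates underlying Lemma~\ref{Frechet diffbility of Psi}, which I would invoke to close the argument.
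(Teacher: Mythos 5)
Your proposal is correct, and its skeleton coincides with the paper's: both pass from the Fr\'echet to the G\^ateaux derivative via Lemma \ref{Frechet diffbility of Psi}, commute $\Pi_0$ with $\frac{d}{dt}$, arrive at $\Pi_0\left(\pa_n u' + \pa_{nn}u\, g_0\right)$, and split $u'=u'_-+u'_+$ by linearity. The one genuinely different step is the treatment of the moving normal. You keep the product $\gr u_t\cdot n_t$, apply the product rule, and annihilate the normal-variation term through the orthogonality $n\cdot\dot n=0$ (from $\abs{n_t\circ\Phi_t}\equiv 1$, with $\Phi_t=\id+t\varphi_0$) combined with $\gr u=(\pa_n u)\,n$ on $\pa\Om_0$. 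The paper never differentiates the normal at all: by the Hopf-lemma identity \eqref{function of value rewritten} it replaces $\gr u_t\cdot n_t$ by $-\abs{\gr u_t}$ on the moving boundary, differentiates that scalar, and reinserts $n=-\gr u/\abs{\gr u}$ at the end. Both devices exploit the same structural fact (the Dirichlet condition makes $\gr u$ purely normal), but the paper's version buys a small technical economy: it requires no justification that $t\mapsto n_{tg_0}\circ\Phi_t$ is differentiable, which your product rule tacitly assumes --- indeed even your derivation of $n\cdot\dot n=0$ presupposes it. This is standard shape calculus (the derivative of the unit normal under a normal perturbation is minus the tangential gradient of $g_0$, see \cite{HP2018}), but it is one extra ingredient you should cite or prove; in exchange, your route is elementary vector calculus and does not rely on the sign information $\gr u_t\cdot n_t<0$. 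A last remark: your observation that on the $f$-side $\Pi_0$ ``only removes the constant mean of $\pa_n u'_-$'' is in fact more scrupulous than the paper, which asserts $\Pi_0(\pa_n u'_-)=\pa_n u'_-$ outright; by Proposition \ref{prop expansion} that identity fails whenever $f_0\cdot n$ carries a nonzero $k=0$ mode on $\pa D_0$, so \eqref{deri f} is literally exact only after projection (or for mean-zero $f_0\cdot n$) --- a subtlety your formulation exposes rather than hides.
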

\begin{proof}
Fix $(f_0,g_0)\in \cF\times\cG$. For simplicity, set $u_t:=u_{tf_0,tg_0}$ and $n_t:=n_{tg_0}$. Since $\Psi$ is Fr\'echet differentiable by Lemma \ref{Frechet diffbility of Psi}, we can compute its Fr\'echet derivative as the following G\^ateaux derivative:
\begin{equation*}
\Psi'(0,0)[f_0,g_0]=\restr{\frac{d}{dt}}{t=0}\Psi(tf_0,tg_0)= \restr{\frac{d}{dt}}{t=0} \Pi_0\Big( \left( \gr u_t\cdot n_t\right)\circ \left( {\rm Id}+tg_0n \right)   \Big).
\end{equation*}
Now, since the projection operator $\Pi_0$ commutes with differentiation, we have
\begin{equation}\label{Psi'=}
\Psi'(0,0)[f_0,g_0]= \Pi_0\left( \restr{\frac{d}{dt}}{t=0} \Big( \left(\gr u_t\cdot n_t \right)\circ \left({\rm Id}+ tg_0n\right)\Big)\right).  
\end{equation}
By \eqref{function of value rewritten}, $\gr u_t \cdot n_t = -|\gr u_t|<0$ on $\pa\Omega_0$. Therefore, we can write
\begin{eqnarray}\nonumber
\Psi'(0,0)[f_0,g_0]= -\Pi_0\left( \restr{\frac{d}{dt}}{t=0} |\gr u_t|\circ \left({\rm Id}+ tg_0n\right)\right) = \\
-\Pi_0 \left(\frac{1}{|\gr u|} \Big(\gr u\cdot \gr u' + (D^2 u\, \gr u)\cdot g_0n  \Big)\right) 
= \Pi_0\left(\pa_n u'\right) +\pa_{nn} u\, \Pi_0g_0,
\end{eqnarray}
where in the last equality we used the fact that $n=-\gr u /|\gr u|$ and that $\pa_{nn} u$ is constant on $\pa\Om_0$. Now, since both maps $f\mapsto \restr{\pa_n u'_-}{\pa\Om_0}$ and $g\mapsto \restr{\pa_n u'_+}{\pa\Om_0}$ preserve the eigenspaces of the Laplace Beltrami operator in the sense of Proposition \ref{prop expansion} and that $\Pi_0 f_0=f_0$ and $\Pi_0 g_0=g_0$ by construction, we obtain that 
\begin{equation*}
    \Psi'(0,0)[f_0,g_0]= 
 \Pi_0\left(\pa_n u'_-\right) + \Pi_0\left( \pa_n u'_+\right) +\pa_{nn} u\, \Pi_0g_0 = \pa_n u'_- + \pa_n u'_+ +\pa_{nn} u\, g_0
\end{equation*}
as claimed. 
The representation formulas for the partial Fr\'echet derivatives $\pa_f \Psi(0,0)$ and $\pa_g \Psi(0,0)$ follow immediately by the definitions of $u'_-$ and $u'_+$.
\end{proof}

Combining Propositions \ref{prop expansion} and \ref{Psi'=something} yields the following. 

\begin{corollary}\label{partial derivatives explicit}
Assume \eqref{h_in h_out exp}. Then the following hold true under the convention \eqref{convention}. 
\begin{equation*}
\begin{aligned} 
\pa_f \Psi(0,0)[f_0]=& 
\displaystyle\frac{2-N}{R^{2-N}-1}R^{2-N}  \displaystyle \sum_{k=0}^\infty \sum_{i=1}^{d_k} \frac{N-2+2k}{R^{2-N-k}-R^k} \al_{k,i}^- Y_{k,i}(\theta),\\     
\pa_g \Psi(0,0)[g_0]=& 
\displaystyle\frac{2-N}{R^{2-N}-1}\sum_{k=1}^\infty \displaystyle \sum_{i=1}^{d_k} \frac{(1-k)R^k+(1-N-k)R^{2-N-k}}{R^{2-N-k}-R^k} \al_{k,i}^+ Y_{k,i}(\theta).     
\end{aligned}
\end{equation*}
\end{corollary}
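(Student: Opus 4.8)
The plan is to substitute the explicit series from Proposition \ref{prop expansion} directly into the representation formulas \eqref{deri f} and \eqref{deri g} established in Theorem \ref{Psi'=something}. The key simplification is that $\pa\Om_0$ is the unit sphere, so the outward normal derivative $\pa_n$ coincides with the radial derivative $\pa_r$ evaluated at $r=1$. Thus the entire computation reduces to differentiating the radial factors $A_k^\pm r^{2-N-k} + B_k^\pm r^k$ term by term and setting $r=1$, which is legitimate once the series representations are granted.

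First I would handle $\pa_f\Psi(0,0)[f_0] = \pa_n u'_-$. Differentiating the series for $u'_-$ and evaluating at $r=1$ produces the $k$-th radial coefficient $(2-N-k)A_k^- + k B_k^-$. Using the relation $B_k^-=-A_k^-$ from Proposition \ref{prop expansion}, this collapses to $(2-N-2k)A_k^-$, and inserting the stated value of $A_k^-$ gives the claimed formula after observing that $-(2-N-2k)=N-2+2k$. This case is essentially immediate.

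For $\pa_g\Psi(0,0)[g_0] = \pa_n u'_+ + \pa_{nn}u\,g_0$ there are two contributions to combine. The radial derivative of $u'_+$ at $r=1$ gives the coefficient $(2-N-k)A_k^+ + k B_k^+$, which by $B_k^+=-A_k^+R^{2-N-2k}$ equals $A_k^+\big[(2-N-k)-kR^{2-N-2k}\big]$. To this I add the term $\pa_{nn}u\,g_0$, whose $k$-th coefficient is the constant $\frac{2-N}{R^{2-N}-1}(1-N)$ read off from \eqref{normal derivatives on the boundary}, since $\pa_{nn}u$ is constant on $\pa\Om_0$ and $g_0$ expands as in \eqref{h_in h_out exp}.

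The only genuine work is the algebraic consolidation of these two pieces, and I expect this to be the single step requiring care. After factoring out $\frac{2-N}{R^{2-N}-1}$ and placing the remaining terms over the common denominator $R^{2-N-2k}-1$, the numerator collapses to $(1-k)+(1-N-k)R^{2-N-2k}$; multiplying numerator and denominator by $R^k$ then yields exactly the stated form $\frac{(1-k)R^k+(1-N-k)R^{2-N-k}}{R^{2-N-k}-R^k}$. The convention \eqref{convention} handles the degenerate case $N=2$ uniformly throughout, so no separate argument is needed there.
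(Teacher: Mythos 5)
Your proposal is correct and takes exactly the route of the paper, whose entire proof of this corollary is the remark that it follows by ``combining'' Proposition \ref{prop expansion} with Theorem \ref{Psi'=something}: substituting the series for $u'_\pm$ into \eqref{deri f} and \eqref{deri g}, differentiating radially at $r=1$, and consolidating with $\pa_{nn}u$ is precisely that computation. I checked the algebra: $(2-N-k)A_k^-+kB_k^-=(2-N-2k)A_k^-$ reproduces the first formula, and $A_k^+\bigl[(2-N-k)-kR^{2-N-2k}\bigr]+\frac{2-N}{R^{2-N}-1}(1-N)$ does collapse to $\frac{2-N}{R^{2-N}-1}\cdot\frac{(1-k)R^k+(1-N-k)R^{2-N-k}}{R^{2-N-k}-R^k}$ after multiplying through by $R^k$, so both stated identities are verified.
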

\subsection{Applying the implicit function theorem}\label{ssection applying ift}
\begin{proof}[Proof of Theorem \ref{main thm 1}]
In what follows, let us assume the result of Lemma \ref{Frechet diffbility of Psi} (see the Appendix for a proof). In order to apply the implicit function theorem (Theorem \ref{ift} of page \pageref{ift}) to $\Psi$, we just need to ensure that the mapping \eqref{deri g} (or, equivalently, the one defined by the second formula of Corollary \ref{partial derivatives explicit}) is a bounded invertible linear transformation from $\cG$ to $\cH$. Linearity and boundedness ensue from the properties of the boundary value problem \eqref{bernu u' eq}. We are left to show that $\pa_g\Phi(0,0):\cG\to\cH$ is a bijection. 
First of all, by Corollary \ref{partial derivatives explicit}, we know that $\pa_g\Psi(0,0):\cG\to\cH$ is given by the map
\begin{equation}\label{another psi_g}
\sum_{k=1}^\infty \sum_{i=0}^{d_k} \al_{k,i}^+ Y_{k,i}\longmapsto \sum_{k=1}^\infty \sum_{i=0}^{d_k} \beta_k \al_{k,i}^+ Y_{k,i},     
\end{equation}
where $\beta_k$ is defined by
\begin{equation}\label{beta_k}
\beta_k= \frac{2-N}{R^{2-N}-1} \ \frac{(1-k)R^k+(1-N-k)R^{2-N-k}}{R^{2-N-k}-R^k}, 
\end{equation}
under the convention \eqref{convention}.
Now, the injectivity of the map \eqref{another psi_g} is an immediate consequence of the fact that, for all $k\ge 1$, the coefficient $\be_k$ in the above never vanishes. Let us now show surjectivity. Take an arbitrary function $h_0\in\cH$. Since, in particular, $h_0$ is continuous on $\pa\Om_0$, it admits a spherical harmonic expansion, say 
\begin{equation*}
    h_0=\sum_{k=1}^\infty\sum_{i=0}^{d_k} \ga_{k,i} Y_{k,i}.
\end{equation*}
Set now 
\begin{equation}\label{g}
    g_0=\sum_{k=1}^\infty\sum_{i=0}^{d_k} \frac{\ga_{k,i}}{\be_k} Y_{k,i}. 
\end{equation}
First of all, notice that, since the sequence $1/\be_k$ is bounded, the function $g_0$ above is a well defined element of $L^2(\pa\Om_0)$. Moreover, the integral of $g_0$ over $\pa\Om_0$ vanishes because the summation in \eqref{g} starts from $k=1$. 
Finally, if we let $\mathcal L$ denote the continuous extension to $L^2(\pa\Om_0)\to L^2(\pa\Om_0)$ of the map defined by \eqref{another psi_g}, it is clear that $g_0=\mathcal{L}^{-1}(h_0)$ by construction. Therefore, in order to prove the surjectivity of the original map $\pa_g\Psi(0,0):\cG\to\cH$, we just need to show that the function $g_0$, defined by \eqref{g}, is of class $\cC^{m,\al}$ whenever $h_0\in \cC^{m-1,\al}$. To this end, we will proceed as in the proof of \cite[Proposition 5.2]{kamburov sciaraffia}. 
First of all, we recall that functions in the Sobolev space $H^s(\pa\Om_0)$ can be characterized by the decay of the coefficients of their spherical harmonic expansion as follows:
\begin{equation*}
    \sum_{k=1}^\infty\sum_{i=0}^{d_k}(1+k^2)^s \al_{k,i}^2 < \infty\quad  \iff\quad \sum_{k=1}^\infty\sum_{i=0}^{d_k} \al_{k,i}^2 Y_{k,i}\in H^s(\pa\Om_0).
\end{equation*}
Since, in particular, $h_0\in\cC^{m-1,\al}(\pa\Om_0)\subset H^{m-1}(\pa\Om_0)$, the asymptotic behavior of the coefficients $\be_k$ given in \eqref{beta_k} yields that $g_0\in H^m(\pa\Om_0)$. Now, let $u'_+$ denote the solution to \eqref{bernu u' eq} where $f_0=0$ and $g_0$ is given by \eqref{g}. By construction, $u_+'$ also satisfies:
\begin{equation}\label{bernu u' eq also satisfies}
\begin{cases}
-\De u_+' = 0 \quad \text{in }\Om_0\setminus\ol D_0,\\
u_+'=0 \quad \text{on } \pa D_0,\\
\pa_n u_+'= h_0-\pa_{nn} u\ g_0  \quad \text{on } \pa \Om_0.
\end{cases}
\end{equation}
Notice that, by assumption, the Neumann data in \eqref{bernu u' eq also satisfies} belongs to $\cC^{m-1,\al}(\pa\Om_0)+H^m(\pa\Om_0)$. Therefore, by elliptic regularity for the Neumann problem, it must be that
\begin{equation*}
u_+' \in \cC^{m,\al}(\ol\Om_0) + H^{m+1}(\Om).
\end{equation*}
Let us argue by induction that
\begin{equation*}
u_+' \in \cC^{m,\al}(\ol \Om_0) + H^{k/2}(\Om_0) \text{ for all } k\ge 2m+2.
\end{equation*}
Indeed, from the inductive assumption we see that the trace of $u_+'$ satisfies
\begin{equation*}
\restr{u_+'}{\pa\Om_0} \in \cC^{m,\al}(\pa\Om_0) + H^{(k-1)/2}(\pa\Om_0),
\end{equation*}
which, in turn, implies that the Neumann data in \eqref{bernu u' eq also satisfies} is in $\cC^{m-1,\al}(\pa\Om_0) + H^{(k-1)/2} (\pa\Om_0)$. Hence, by elliptic regularity for the Neumann problem, $u_+' \in \cC^{m,\al}(\ol \Om_0) + H^{(k+1)/2}(\Om_0)$, which completes the inductive step. By Sobolev
embedding, we now conclude that $u_+' \in \cC^{m,\al}(\ol\Om_0)$, so that its trace $\restr{u_+'}{\pa\Om_0}= h_0-\pa_{nn}u\ g_0$ belongs to $\cC^{m,\al}(\pa\Om_0)$.
In particular, this implies that $g_0\in\cC^{m,\al}(\pa\Om_0)$, as claimed. This concludes the proof of the invertibility of the map $\pa_g\Psi(0,0):\cG\to\cH$. Finally, boundedness ensues by the Schauder boundary estimates and thus the proof of Theorem \ref{main thm 1} is complete.
\end{proof}

Moreover, item $(iii)$ of Theorem \ref{ift} yields the following asymptotic behavior of $g(f)$. 
\begin{corollary}\label{asymptotic thm 1}
Suppose that $f\cdot n=\sum_{k=1}^\infty\sum_{i=1}^{d_k} \al_{k,i}^- Y_{k,i}$ on $\pa D_0$, then
\begin{equation*}
g\left( f  \right)=
\sum_{k=1}^\infty\sum_{i=1}^{d_k} \frac{(2-N-2k)R^{2-N}}{(1-k)R^k+(1-N-k)R^{2-N-k}}\al_{k,i}^- Y_{k,i} + o\left(\norm{f}_{\cC^{0,1}}\right) \quad \text{as } \norm{f}_{\cC^{0,1}}\to0.
\end{equation*}
\end{corollary}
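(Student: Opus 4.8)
The plan is to read off the first-order asymptotics of $g(f)$ directly from item $(iii)$ of the implicit function theorem (Theorem \ref{ift}), fed by the explicit partial derivatives of $\Psi$ already computed in Corollary \ref{partial derivatives explicit}. First I would pin down the base point of the expansion. Since $(D_0,\Om_0)$ is the trivial solution we have $\Psi(0,0)=0$, so that $(0,0)\in\Theta\times W^*$ and item $(ii)$ of Theorem \ref{ift} forces $\widetilde g(0)=0$. Applying item $(iii)$ at $p=(0,0)$ then gives the Fr\'echet derivative of the solution map,
\begin{equation*}
(\widetilde g)'(0) = -\left[\pa_g\Psi(0,0)\right]^{-1}\circ \pa_f\Psi(0,0).
\end{equation*}
Because $\widetilde g$ is at least of class $\cC^1$ and $\widetilde g(0)=0$, the very definition of Fr\'echet differentiability yields the first-order expansion
\begin{equation*}
g(f) = (\widetilde g)'(0)[f] + o\!\left(\norm{f}_{\cC^{0,1}}\right) = -\left[\pa_g\Psi(0,0)\right]^{-1}\big(\pa_f\Psi(0,0)[f]\big) + o\!\left(\norm{f}_{\cC^{0,1}}\right)
\end{equation*}
as $\norm{f}_{\cC^{0,1}}\to 0$, with the remainder measured in the $\cG$-norm.

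It then remains to evaluate the leading term. Writing $f\cdot n=\sum_{k=1}^\infty\sum_{i=1}^{d_k}\al_{k,i}^- Y_{k,i}$, the first formula of Corollary \ref{partial derivatives explicit} supplies the spherical-harmonic coefficients of $\pa_f\Psi(0,0)[f]$, while $\pa_g\Psi(0,0)$ acts diagonally by multiplication by $\be_k$ from \eqref{beta_k} (this is precisely the content of \eqref{another psi_g}), so that $[\pa_g\Psi(0,0)]^{-1}$ divides the $k$-th coefficient by $\be_k$. Since both maps preserve each eigenspace of the Laplace--Beltrami operator, I would compute the coefficient of $Y_{k,i}$ in $-[\pa_g\Psi(0,0)]^{-1}(\pa_f\Psi(0,0)[f])$ mode by mode. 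The common factor $\frac{2-N}{R^{2-N}-1}$ and the common denominator $R^{2-N-k}-R^k$ cancel between numerator and $\be_k$, leaving exactly
\begin{equation*}
\frac{(2-N-2k)R^{2-N}}{(1-k)R^k+(1-N-k)R^{2-N-k}}\,\al_{k,i}^-,
\end{equation*}
which is the claimed coefficient; summing over $k$ and $i$ reproduces the stated series.

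The calculation itself is routine algebra, and the genuine points of care are conceptual rather than computational. First, the expansion must be read in the correct topology: the $o(\cdot)$ remainder is controlled in the $\cG=\cC^{m,\al}$ norm, and the passage from $\norm{g(f)-(\widetilde g)'(0)[f]}_{\cC^{m,\al}}=o(\norm{f}_{\cC^{0,1}})$ to the mode-by-mode identity relies on the fact that the spherical-harmonic coefficients depend boundedly on the $L^2$- (hence $\cC^{m,\al}$-) norm, so that the modes of the error are themselves $o(\norm{f}_{\cC^{0,1}})$. Second, since Lemma \ref{Frechet diffbility of Psi} only guarantees $\cC^1$ regularity of $\Psi$, the expansion is genuinely first-order, which is all the statement asserts. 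Finally, I would note that any $k=0$ component of $f\cdot n$ is irrelevant, being annihilated by the projection $\Pi_0$ built into $\Psi$; under the hypothesis of the corollary it is absent altogether, so no extra bookkeeping is needed.
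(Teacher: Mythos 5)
Your proposal is correct and follows exactly the paper's route: the paper derives Corollary \ref{asymptotic thm 1} precisely by invoking item $(iii)$ of Theorem \ref{ift} at $p=(0,0)$ together with the diagonal (mode-by-mode) expressions of Corollary \ref{partial derivatives explicit} and \eqref{beta_k}, which is what you do, and your cancellation of the common factor $\frac{2-N}{R^{2-N}-1}$ and denominator $R^{2-N-k}-R^k$ reproduces the stated coefficient. Your additional remarks (that $\widetilde g(0)=0$ by uniqueness, that the remainder is measured in the $\cG$-norm, and that a $k=0$ mode would be killed by $\Pi_0$) are correct refinements of what the paper leaves implicit.
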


\section{Proof of Theorem \ref{main thm 2}}
The proof of Theorem \ref{main thm 2} follows along the same lines as that of Theorem \ref{main thm 1}, with some obvious modification. 
First of all, fix $\sg_c\ne 1$ and let $\Psi:\cF\times\cG\to\cH$ denote the function defined by \eqref{Psi} in the sense of \eqref{function of value}, with $u_{f,g}$ being the solution to the boundary value problem
\begin{equation}\label{serrin u_fg eq}
\begin{cases}
-\dv(\sg_{f,g} \gr u)=1 \quad \tin \Om_g,\\
u=0\quad  \ton\pa\Om_g,
\end{cases}    
\end{equation}
where $\sg_{f,g}$ denotes the piece-wise constant function defined by \eqref{sigma} with respect to the pair $(D_f,\Om_g)$.
Clearly, $\Psi(f,g)=0$ if and only if the pair $(D_f,\Om_g)$ solves Problem 2. 

In what follows we will admit the shape differentiability of $u_{f,g}$ and the Fr\'echet differentiability of $\Psi$, while their proofs will be postponed to the Appendix.  
The actual explicit expressions for the shape derivative of $u_{f,g}$ and the Fr\'echet derivative of $\Psi$ can be obtained by following the proofs of \cite[Proposition 3.1,  Proposition 3.2]{cava2018} and \cite[Theorem 3.3]{CY1} verbatim. 
As a result, we get the following expressions for the partial Fr\'echet derivatives of $\Psi$ under \eqref{h_in h_out exp}. 

Assume \eqref{h_in h_out exp}. Then the following hold true. 
\begin{equation*}
\begin{aligned} 
\pa_f \Psi(0,0)[f_0]=& 
 \displaystyle \sum_{k=0}^\infty \sum_{i=1}^{d_k}  \frac{2-N-2k}{F}(\sg_c-1)k R^{1-k} \al_{k,i}^- Y_{k,i}(\theta),\\    
\pa_g \Psi(0,0)[g_0]=& 
\sum_{k=1}^\infty \displaystyle \sum_{i=1}^{d_k} 
\frac{(N+k-1)(\sg_c-1)k+(N-2+k+k\sg_c)(k-1)R^{2-N-2k}}{F}\al_{k,i}^+ Y_{k,i}(\theta), 
\end{aligned}
\end{equation*}
where $F= N(N-2+k+k\sg_c)R^{2-N-2k}+k N (1-\sg_c)$.
Now, we can proceed as in Subsection \ref{ssection applying ift} and show that the mapping $\pa_g \Psi(0,0):\cG\to\cH$ is a bounded bijection if and only if $\sg_c\notin \Si$. This completes the proof of Theorem \ref{main thm 2}. Furthermore, by item $(iii)$ of Theorem \ref{ift} we get the following.
\begin{corollary}\label{asymptotic thm 2}
Suppose that $f\cdot n= \sum_{k=1}^\infty \sum_{i=1}^{d_k} \al_{k,i}^- Y_{k,i}$ on $\pa D_0$. Then the following asymptotic behavior holds true as $\norm{f}_{\cC^{0,1}}\to0$.
\begin{equation*}
g(f)=\sum_{k=1}^\infty\sum_{i=1}^{d_k} \frac{\alpha_{k,i}^-(N+2k-2)(\sg_c-1)k R^{1-k}}{(N+k-1)(\sg_c-1)k + (N-2+k+k\sg_c)(k-1)R^{2-N-2k}}  Y_{k,i} + o(\norm{f}_{\cC^{0,1}}).
\end{equation*}
\end{corollary}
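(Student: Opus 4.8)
The plan is to read off this expansion directly from item $(iii)$ of the implicit function theorem (Theorem \ref{ift}), fed with the explicit partial Fr\'echet derivatives displayed just above the statement. Since $(D_0,\Om_0)$ is the trivial solution of Problem \ref{pb 2}, we have $\Psi(0,0)=0$, and the uniqueness part of Theorem \ref{main thm 2} (equivalently item $(ii)$ of Theorem \ref{ift}) forces the implicitly defined map to satisfy $\widetilde g(0)=0$. Evaluating item $(iii)$ at $f=0$ then gives
\begin{equation*}
(\widetilde g)'(0)=-\left[\pa_g\Psi(0,0)\right]^{-1}\circ\pa_f\Psi(0,0),
\end{equation*}
and, because $\widetilde g\in\cC^1(\Theta,\cG)$ vanishes at the origin, a first-order Taylor expansion yields
\begin{equation*}
g(f)=(\widetilde g)'(0)[f]+o\!\left(\norm{f}_{\cC^{0,1}}\right)\quad\text{as }\norm{f}_{\cC^{0,1}}\to0.
\end{equation*}
Thus the whole task reduces to making the linear term $(\widetilde g)'(0)[f]$ explicit.

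The computation is purely algebraic once one notices that both partial derivatives are \emph{diagonal} with respect to the spherical harmonic decomposition: writing $\pa_f\Psi(0,0)$ as multiplication of the $(k,i)$-mode by a scalar $\nu_k$ and $\pa_g\Psi(0,0)$ as multiplication by $\mu_k$ (the two coefficients appearing in the formulas preceding the statement), the inverse $[\pa_g\Psi(0,0)]^{-1}$ acts on each mode by division by $\mu_k$. This is exactly where the hypothesis $\sg_c\notin\Si$ is used, since it guarantees $\mu_k\neq0$ for every $k\ge1$. Composing mode by mode, the $(k,i)$-coefficient of $(\widetilde g)'(0)[f]$ equals $-\nu_k/\mu_k$ times $\al_{k,i}^-$; the common denominator $F$ cancels, the factor $k$ annihilates the $k=0$ contribution (so the series may be started at $k=1$, consistently with the hypothesis on $f\cdot n$), and the identity $-(2-N-2k)=N+2k-2$ turns $-\nu_k/\mu_k$ into precisely the coefficient written in the statement. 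Recall also that by the structure theorem the derivative $\pa_f\Psi(0,0)$ depends on $f$ only through $f\cdot n$ on $\pa D_0$, which is why the assumption is phrased in terms of its spherical harmonic expansion.

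The only genuinely delicate point, as opposed to bookkeeping, is the interplay between the two topologies involved. On the one hand, one must be sure that $[\pa_g\Psi(0,0)]^{-1}$ really acts term by term on the spherical harmonic series; this is immediate from its diagonal form together with the boundedness and bijectivity of $\pa_g\Psi(0,0):\cG\to\cH$ already established in the proof of Theorem \ref{main thm 2}. On the other hand, the remainder must be $o(\norm{f}_{\cC^{0,1}})$ measured in the norm of $\cG=\cC^{m,\al}(\pa\Om_0)$, and not merely in $L^2(\pa\Om_0)$; but this is automatic, since the expansion of $g(f)$ above is nothing but the defining property of the Fr\'echet derivative of the $\cG$-valued map $\widetilde g$, whose existence and $\cC^1$ regularity are part of the conclusion of Theorem \ref{ift}. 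No independent convergence estimate for the series is therefore required beyond those already invoked to invert $\pa_g\Psi(0,0)$.
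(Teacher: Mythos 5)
Your proposal is correct and matches the paper's own (very terse) proof, which likewise obtains the expansion by evaluating item $(iii)$ of Theorem \ref{ift} at the trivial solution and composing the diagonal actions of $\pa_f\Psi(0,0)$ and $[\pa_g\Psi(0,0)]^{-1}$ mode by mode on the spherical harmonic expansion. Your verification of the algebra (cancellation of $F$, the sign identity $-(2-N-2k)=N+2k-2$, the factor $k$ killing the $k=0$ mode, and $\sg_c\notin\Si$ guaranteeing invertibility) fills in exactly the details the paper leaves implicit.
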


\section{Concluding remarks}
\subsection{On the optimal regularity of $\pa D_f$ 
}
To the best of our knowledge, there is no theory of shape derivatives that deals with \emph{perturbation fields} that are not at least Lipschitz continuous. Indeed, it is worth noticing that many known results aim to generalize shape calculus in the other direction: that is, trying to define shape derivatives for non-regular \emph{sets} (even just measurable sets) but with respect to ``regular" perturbation fields (we refer the interested reader to \cite{SG, HP2018} and the references therein). So in this sense, we can state that our result yields the optimal (known) regularity for the fixed boundary $\pa D_f$.  
 
Finally, we would like to remark that applying only the (classical) theory of shape derivatives with respect to Lipschitz continuous perturbations to both boundaries $\pa D_0$ and $\pa \Om_0$ would not have been enough to show the existence of solutions as done in Theorems \ref{main thm 1} and \ref{main thm 2}. Indeed, the functional $\Psi(f,g)$ itself turns out to be not well-defined if the function $g$ is just Lipschitz continuous (since, for instance, one cannot define the trace of $\gr u_{f,g}$ on $\pa\Om_g$ if $\pa\Om_g$ is not Lipschitz continuous).     
 
\subsection{On the optimal regularity of $\pa\Om_g$}
It can be shown (see \cite[Theorem 2]{KN77}) that the free boundary $\pa\Om_g$ given by Theorems \ref{main thm 1} and \ref{main thm 2} is indeed an analytic surface. Despite that, to our knowledge, it is not clear how this result could be obtained directly by the SAP method since the class of analytic functions is not naturally endowed with a Banach space structure.   

In \cite{cava2020}, the author considered a variation of Problem 2 where the overdetermined condition on the normal derivative is been replaced by $\pa_n u(x) = cH(x)$ instead (here $H(x)$ denotes the mean curvature of $\pa\Om$ at the point $x$).
We remark that the SAP method can be applied in this case as well. As a consequence, one can prove existence of solutions of the form $(D_f, \Om_g)$ where $\pa D_f$ is Lipschitz continuous and $\pa \Om_g$ is of class $\cC^{m,\al}$ for arbitrarily large $m$. Notice that, unlike Problem 2, the machinery of \cite{KN77} cannot be applied to obtain the analyticity of the solutions, since the overdetermined condition $\pa_n u(x) = cH(x)$ is not one of the types considered in \cite{KN77}.   

\subsection{Local symmetry of solutions}
Let $(D_f,\Om_g)$ be a solution of Problem 1 (resp. Problem 2) for small enough $(f,g)$ under the hypotheses of Theorem \ref{main thm 1} (resp. Theorem \ref{main thm 2}, in particular, assume $\sg_c\notin\Si$). Then the functions $f$ and $g$ ``share the same symmetries".  This can be made precise by the following:
\begin{proposition}
Let $\ga$ be an element of the orthogonal group $O(N)$ such that $f$ is invariant with respect to $\ga$ (i.e. $f\circ\ga=f$). Then, $g$ is also invariant with respect to $\ga$.
\end{proposition}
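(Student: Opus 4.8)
The plan is to combine the $O(N)$-invariance of the overdetermined problem with the local uniqueness of the solution granted by the implicit function theorem (Theorem \ref{ift}). First I would let $\ga\in O(N)$ act on the parameter spaces: on $\cG$ and on $\cH$ by precomposition, $(\ga\cdot g)(\theta):=g(\ga\inv\theta)$ for $\theta\in\pa\Om_0=\SS^{N-1}$, and on $\cF$ by the pushforward $(\ga\cdot f)(x):=\ga\,f(\ga\inv x)$. Since $\ga$ is a Euclidean isometry fixing the origin, each of these is a linear isometry of the corresponding Banach space; in particular the $\cC^{m,\al}$ and $\cC^{0,1}$ norms are preserved, as is the mean-zero constraint $\int_{\pa\Om_0}g=0$. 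A short computation using $\ga(D_0)=D_0$ and $\ga(\Om_0)=\Om_0$ then shows that the perturbed domains transform equivariantly,
\begin{equation*}
D_{\ga\cdot f}=\ga(D_f),\qquad \Om_{\ga\cdot g}=\ga(\Om_g),
\end{equation*}
and that this depends only on $f|_{D_0}$ and on $g$, so the particular choice of extension $\varphi_{f,g}$ plays no role. In this notation the hypothesis that $f$ is invariant under $\ga$ is encoded as $\ga\cdot f=f$ (equivalently $\ga(D_f)=D_f$), while the desired conclusion $g\circ\ga=g$ is exactly $\ga\cdot g=g$.

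The crucial step is to prove the $\ga$-equivariance of the shape functional, namely
\begin{equation*}
\Psi(\ga\cdot f,\ga\cdot g)=\ga\cdot\Psi(f,g).
\end{equation*}
Because $\ga$ is orthogonal, the Laplacian commutes with it (and, for Problem 2, so does the operator $\dv(\sg_{f,g}\gr\,\cdot\,)$, since $\sg_{\ga\cdot f,\ga\cdot g}=\sg_{f,g}\circ\ga\inv$ by $\ga(D_f)=D_{\ga\cdot f}$); moreover the Dirichlet data and the right-hand sides in \eqref{dirichlet bernu} and \eqref{serrin u_fg eq} are rotation invariant. Uniqueness for the respective boundary value problems then gives $u_{\ga\cdot f,\ga\cdot g}=u_{f,g}\circ\ga\inv$. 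Substituting this into the definition \eqref{function of value} of $\restr{(\pa_{n_g}u_{f,g})}{\pa\Om_0}$ and using that both the evaluation point $x+g(x)n(x)$ and the unit normal $n_g$ there are carried by $\ga$ to the corresponding objects for $\ga\cdot g$, the scalar product $\gr u\cdot n_g$ is preserved; hence the pulled-back normal derivative transforms by precomposition with $\ga$. Since the projection $\Pi_0$ commutes with this action (the integral over $\pa\Om_0$ is rotation invariant), the displayed equivariance follows.

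Finally I would run the uniqueness argument. Let $g=g(f)$ be the solution furnished by Theorem \ref{main thm 1} (resp. Theorem \ref{main thm 2}), so that $\Psi(f,g)=0$. If $\ga\cdot f=f$, the equivariance just established yields
\begin{equation*}
\Psi(f,\ga\cdot g)=\Psi(\ga\cdot f,\ga\cdot g)=\ga\cdot\Psi(f,g)=0.
\end{equation*}
Because $\ga$ acts isometrically, $\norm{\ga\cdot g}=\norm{g}$, so $\ga\cdot g$ lies in the same small ball around $0\in\cG$ on which the solution of $\Psi(f,\,\cdot\,)=0$ is unique. Item $(ii)$ of Theorem \ref{ift} therefore forces $\ga\cdot g=g$, i.e. $g\circ\ga=g$, as claimed.

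The main obstacle is the verification of the equivariance identity for $\Psi$: one must check carefully that $u_{\ga\cdot f,\ga\cdot g}=u_{f,g}\circ\ga\inv$ and then track how the somewhat delicate object in \eqref{function of value} --- the normal derivative of $u_{f,g}$ on $\pa\Om_g$ pulled back to the fixed reference boundary $\pa\Om_0$ --- behaves under precomposition with $\ga$. Once the transformation rule for the evaluation point and for the normal field $n_g$ is settled, everything else reduces to the elementary $O(N)$-invariance of the data, of $\Pi_0$, and of the Banach space norms.
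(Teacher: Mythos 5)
Your proposal is correct and follows essentially the same route as the paper's proof: exploit the $O(N)$-equivariance of the overdetermined problem (the paper phrases this as ``since $\ga$ is a rigid motion, $(D_{f\circ\ga},\Om_{g\circ\ga})$ is also a solution'') and then invoke the local uniqueness statement, item $(ii)$ of Theorem \ref{ift}, to force $g\circ\ga=g$. Your write-up simply makes explicit the details the paper leaves implicit, namely the equivariant group action on $\cF\times\cG\times\cH$, the identity $u_{\ga\cdot f,\ga\cdot g}=u_{f,g}\circ\ga\inv$ behind the equivariance of $\Psi$, and the isometry argument guaranteeing that $\ga\cdot g$ stays in the uniqueness neighborhood.
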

\begin{proof}
By assumption, $(D_f,\Om_g)$ is a solution of Problem 1 (resp. Problem 2). Now, since $\ga$ is a rigid motion, $(D_{f\circ\ga}, \Om_{g\circ\ga})$ is also a solution. Furthermore, $D_f=D_{f\circ \ga}$ by hypothesis. Since, by assumption $f$ and $g$ are small enough and $\sg_c\notin\Si$, then we can apply item $(ii)$ of Theorem \ref{ift} to conclude that $g=g\circ \ga=g(f)$ as claimed. 
\end{proof}
We remark that, for Problem 2, this result holds only for sufficiently small $(f,g)$ and $\sg_c\notin\Si$. Indeed, as shown in \cite{CYisaac}, Problem 2 admits solutions of the form $(D_0,\Om_g)$ for $g\ne 0$ branching from the bifurcation point $\sg_c=s_k\in\Si$. In this case, $g$ is invariant with respect to a strictly smaller (and non empty) subset of $O(N)$. 

\subsection{A counterexample concerning convexity}\label{subs convexity}

Let $(D,\Om)$ be a solution of \eqref{bernu}. It is known that, if $D$ is convex then $\Om$ must be as well (see \cite{HS97}). 
In what follows we will show that the converse does not hold. 
To this end, we will make use of the SAP method and construct a counterexample showing that the convexity of $\Om$ does not necessarily imply that of $D$.  
Let $(\boldsymbol 0,R)\in\RR^{N-1}\times \RR$ be the north pole of $\pa D_0$ and let $f_\vee:\RR^{N-1}\times \RR\to\RR^{N-1}\times \RR$ be the following Lipschitz continuous function:
\begin{equation*}
f_\vee(x,y)= \begin{cases}
(\boldsymbol 0,|x|-\varepsilon)\quad & \tfor (x,y)\in \RR^{N-1}\times \RR, \; |x|\le \ve, \; |y-R|\le \ve,\\
(\boldsymbol 0,0)\quad & \text{otherwise}.
\end{cases}    
\end{equation*}
Here $\varepsilon$ is a positive parameter to be chosen such that $\supp f_\vee \subset B$.
Let $(D_t,\Om_t)$ denote the solution of \eqref{bernu} given by Theorem \ref{main thm 1} for $f=t f_\vee$ (see Figure \ref{indentedring}).
\begin{figure}[h]
\centering
\includegraphics[width=0.4\linewidth]{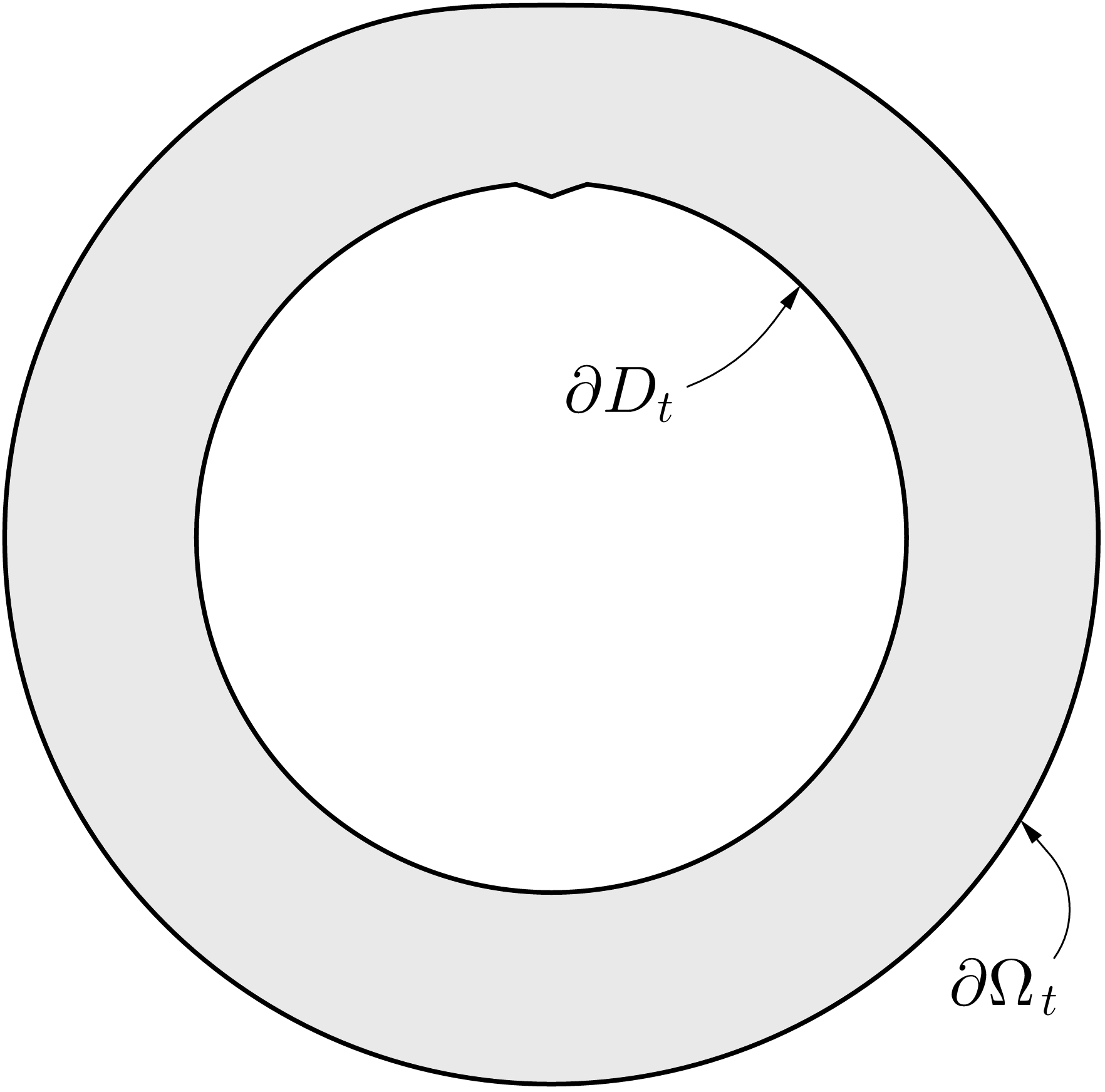}
\caption{Counterexample: $\Om_t$ is convex but $D_t$ is not.} 
\label{indentedring}
\end{figure}
Notice that, by taking $t$ sufficiently small, $\pa\Om_t$ can be made arbitrarily close to $\pa\Om_0$ in the $\cC^2$ norm. As a consequence, we can find some $t>0$ such that $\pa\Om_t$ has positive sectional curvature everywhere and thus is a convex set by \cite{sacksteder}. On the other hand, by construction, the set $D_t$ is never convex, no matter how small $t$ is. 

Clearly, the same considerations can be made for Problem 2 by applying Theorem~\ref{main thm 2}.

\section{Appendix: proof of the Fr\'echet differentiability of $\Psi$}
In this section, we give a proof of the Fr\'echet differentiability of the map $\Psi$. The procedures used are standard (see for instance \cite{SG, HP2018}) but some technicalities arise when we try to ``glue together" perturbations with different regularities.
\subsection{For the Problem 1}

Let $\Om\subset\rn$ be a bounded domain of class $\cC^{m+1,\al}$ and let $D\subset\ol D\subset \Om$ be an open set with Lipschitz continuous boundary. Suppose that $D$ has ``no holes" so that $\Om\setminus\ol D$ is a domain. 
Moreover, for some sufficiently small constant $\de>0$, set 
\begin{eqnarray}\label{K K' B}
\quad K:=\setbld{x\in \ol\Om}{\dist(x,\pa\Om)\le \de}, \quad K':= \setbld{x\in \ol\Om}{\dist(x,\pa\Om)\le 2\de},\\
B:=\setbld{x\in\Om}{\dist(x,\ol D)<\de},
\end{eqnarray}
where $\dist(x,\pa\Om)$ denotes the distance between the point $x$ and the set $\pa\Om$. Notice that, by taking $\de$ small enough, we can assume that $K'\cap\ol D =\emptyset$. 
\begin{figure}[h]
\centering
\includegraphics[width=0.5\linewidth]{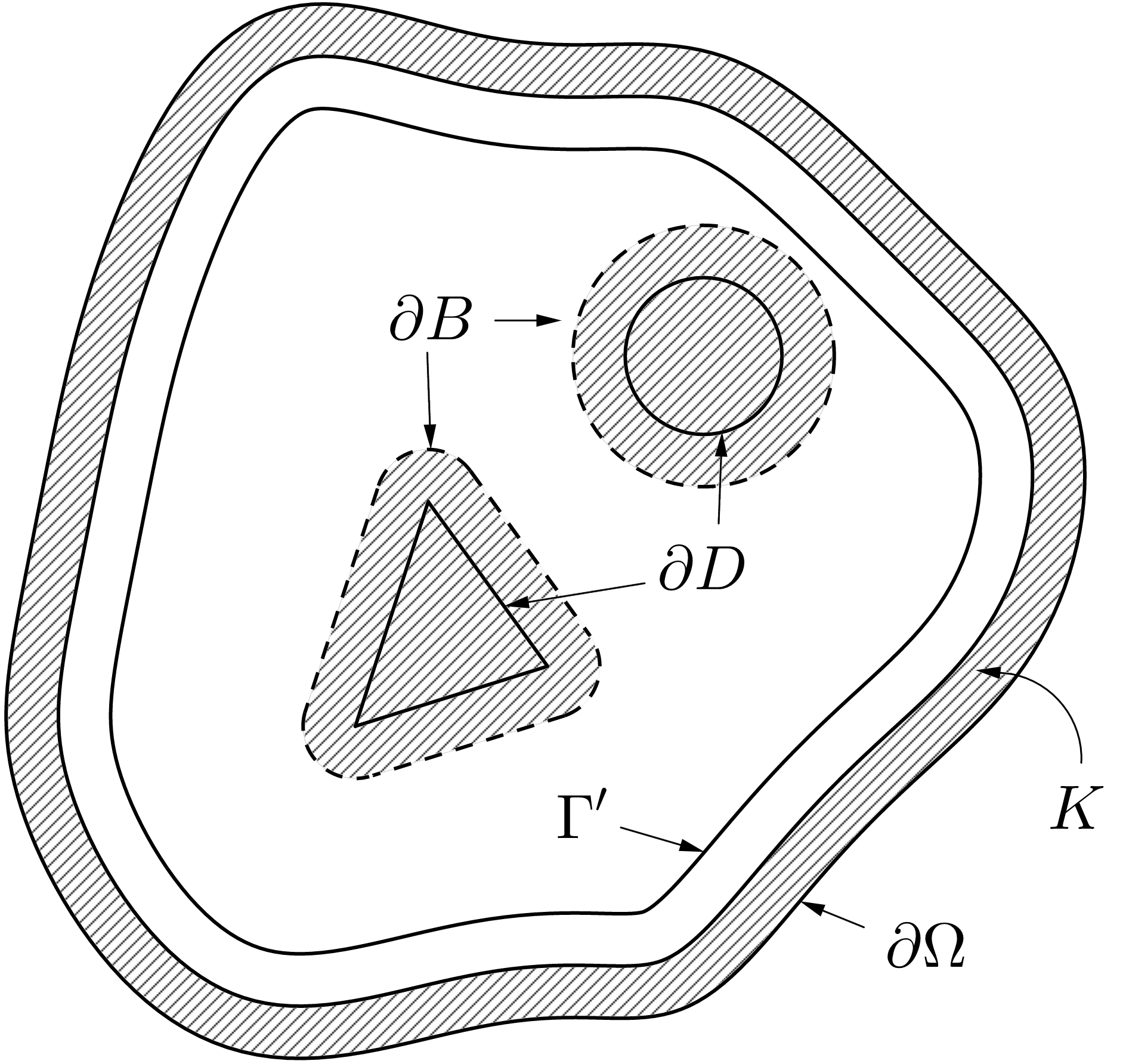}
\caption{The geometrical construction used in this section.} 
\label{nonlinear}
\end{figure}
Finally, define the following Banach space
\begin{equation}\label{A}
\cA:=\setbld{\varphi\in\cC^{0,1}(\ol\Om,\rn)}{
\varphi\equiv 0 \quad \tin \Om\setminus(K^\circ\cup B), \quad 
\restr{\varphi}{K'}\in\cC^{m,\al}(K',\rn)}
\end{equation}
endowed with the norm 
\begin{equation*}
    \norm{\varphi}:=\norm{\varphi}_{W^{1,\infty}(\Om,\rn)}+\norm{\varphi}_{\cC^{m,\al}(K',\rn)}.
\end{equation*}
Notice that, for sufficiently small $\varphi\in\cA$, 
the sets  
\begin{equation}\label{Om_phi}
D_\varphi:=({\rm Id}+\varphi)(D), \quad 
\Om_\varphi:=  ({\rm Id}+\varphi)(\Om)
\end{equation}
are simply connected domains with boundaries of class $\cC^{0,1}$ and $\cC^{m,\al}$ respectively.
Furthermore, notice that the points in $\Om\setminus(K^\circ \cup B)$ are not moved by $\id+\varphi$.
Now, let $u_\varphi$ denote the solution to the following boundary value problem.
\begin{equation}\label{bernu2}
    \begin{cases}
    -\De u_\varphi = 0 \quad &\text{in }\Om_\varphi\setminus \ol D_\varphi,\\
    u_\varphi=1 \quad &\text{on }\pa D_\varphi,\\
    u_\varphi=0 \quad &\text{on }\pa \Om_\varphi.
    \end{cases}
\end{equation}
By the standard Schauder theory for elliptic operators (\cite{GT}), $u_\varphi$ belongs to 
\begin{equation*}
    H^1(\Om_\varphi\setminus\ol{D_\varphi})\cap \cC^{m,\al}(\Om_\varphi\cap W)
\end{equation*}
for any arbitrary open neighborhood $W$ of $\pa\Om_\varphi$ that does not intersect $\pa D_\varphi$. 
Finally, notice that, if $\norm{\varphi}$ is small enough, 
the function
\begin{equation}\label{v_varphi}
v_\varphi:=u_\varphi\circ\restr{\left( {\rm Id}+\varphi \right)}{\Om\setminus \ol{D}}    
\end{equation}
is a well defined element of $H^1(\Om\setminus \ol{D})\cap \cC^{m,\al}(K')$.
Then the following holds true.
\begin{lemma}\label{material differentiability bernu}
The map $\varphi\mapsto v_\varphi\in H^1(\Om\setminus \ol{D})\cap \cC^{m,\al}(K')$ is of class $\cC^\infty$ in a neighborhood of $\varphi=0\in \cA$.
\end{lemma}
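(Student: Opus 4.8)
The plan is to pull the boundary value problem \eqref{bernu2} back to the fixed domain $\Om\setminus\ol D$ by the change of variables $y=(\id+\varphi)(x)$ and then apply the $\cC^\infty$ version of the implicit function theorem. Writing $A_\varphi:=\id+D\varphi$ for the Jacobian of the perturbation and recalling that $v_\varphi=u_\varphi\circ(\id+\varphi)$, a standard computation shows that $v_\varphi$ is the weak solution of
\begin{equation*}
-\dv(M_\varphi\gr v_\varphi)=0 \quad\tin \Om\setminus\ol D, \qquad v_\varphi=1 \ \ton \pa D, \qquad v_\varphi=0 \ \ton \pa\Om,
\end{equation*}
where $M_\varphi:=|\det A_\varphi|\,A_\varphi^{-1}A_\varphi^{-\mathsf{T}}$. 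The crucial observation is that the regularity of $M_\varphi$ inherits the \emph{splitting} built into the definition \eqref{A} of $\cA$: since $\varphi\in W^{1,\infty}$ globally and $\restr\varphi{K'}\in\cC^{m,\al}$, the matrix field $M_\varphi$ belongs to $L^\infty(\Om)$ and is of class $\cC^{m-1,\al}$ on $K'$. Moreover, because $K'\cap\ol D=\emptyset$, the coefficients are as regular as we please on the annular collar $K'$ of $\pa\Om$, while only boundedness is available near the low-regularity boundary $\pa D$.

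First I would establish that $\varphi\mapsto M_\varphi$ is of class $\cC^\infty$ from a neighborhood of $0\in\cA$ into the Banach space of matrix fields that are $L^\infty$ on $\Om$ and $\cC^{m-1,\al}$ on $K'$. This is where the two regularities must be handled jointly: the map $\varphi\mapsto A_\varphi$ is affine and continuous into this coefficient space, and the maps $A\mapsto A^{-1}$ and $A\mapsto\det A$ are real-analytic on the open set of invertible matrices. Since both $L^\infty(\Om)$ and $\cC^{m-1,\al}(K')$ are Banach algebras, the induced Nemytskii operators are $\cC^\infty$ (indeed analytic) as long as $\norm\varphi$ is small enough that $A_\varphi$ stays uniformly invertible.

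Next I would introduce the solution space
\begin{equation*}
X:=\left\{w\in H^1(\Om\setminus\ol D)\;:\;\restr w{K'}\in\cC^{m,\al}(K')\right\}
\end{equation*}
and the map $\mathcal G(\varphi,w):=-\dv(M_\varphi\gr w)$, acting on the affine subset of $X$ cut out by the fixed boundary data, with values in a target space $Y$ that records the equation both in the weak $H^{-1}$ sense on $\Om\setminus\ol D$ and in the $\cC^{m-2,\al}$ sense on $K'$. Since $(M,w)\mapsto-\dv(M\gr w)$ is bounded and bilinear, composing it with the $\cC^\infty$ coefficient map of the previous step shows $\mathcal G\in\cC^\infty$. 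At $\varphi=0$ one has $M_0=\id$, so $\pa_w\mathcal G(0,v_0)[w]=-\De w$, and the implicit function theorem will produce a $\cC^\infty$ map $\varphi\mapsto w(\varphi)$ with $\mathcal G(\varphi,w(\varphi))=0$ and $w(0)=v_0$; by uniqueness $w(\varphi)=v_\varphi$, which is the claim.

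The hard part is the isomorphism: one must choose $X$ and $Y$ so that $\pa_w\mathcal G(0,v_0)=-\De$ is genuinely an isomorphism between these split spaces. The difficulty is that the global $H^1$ theory controls $w$ only in $H^1$ near $\pa D$, whereas Schauder theory upgrades $w$ to $\cC^{m,\al}$ only on $K'$, where $M_\varphi$ is smooth and $\pa\Om$ is of class $\cC^{m+1,\al}$. I would bridge the two by a cutoff $\chi$ equal to $1$ on $K$ and supported in the interior of $K'$: the weak formulation handles $w$ globally, while $\chi w$ solves an elliptic equation with $\cC^{m-1,\al}$ coefficients whose right-hand side is controlled by the $H^1$ norm of $w$ through interior estimates on $\supp\gr\chi\subset K'\setminus K$, boundary Schauder estimates then delivering the $\cC^{m,\al}$ bound up to $\pa\Om$. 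This localization is exactly what makes the elliptic regularity compatible with the splitting, and hence what makes $Y$ the correct target for the isomorphism to hold.
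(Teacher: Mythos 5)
You follow the same overall strategy as the paper: pull the problem back to the fixed domain $\Om\setminus\ol D$, prove smoothness of $\varphi\mapsto M_\varphi$ (the paper's $A_\varphi$) into $L^\infty(\Om)\cap\cC^{m-1,\al}(K')$ via Banach-algebra and Neumann-series arguments, and apply the implicit function theorem to $(\varphi,w)\mapsto-\dv(M_\varphi\gr w)$, whose linearization at $\varphi=0$ is $-\De$. The genuine gap sits exactly at the step you yourself call ``the hard part'': with your spaces $X=\setbld{w\in H^1(\Om\setminus\ol D)}{\restr{w}{K'}\in\cC^{m,\al}(K')}$ and $Y=\setbld{h\in H^{-1}(\Om\setminus\ol D)}{\restr{h}{K'}\in\cC^{m-2,\al}(K')}$, the operator $-\De:X\to Y$ is \emph{not} invertible, and your cutoff argument cannot repair this. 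Elliptic regularity is local: the equation for $\chi w$ yields the $\cC^{m,\al}$ bound only where $\chi\equiv1$, i.e.\ on $K$, and interior estimates give it on compact subsets of the open collar; nothing controls $w$ up to the inner boundary $\Ga':=\pa K'\setminus\pa\Om$. This failure is not a missing estimate but a fact: place a single-layer source with high tangential frequency on the surface $\setbld{x}{\dist(x,\pa\Om)=2\de+d}$ just outside $K'$, normalized to unit $H^{-1}$ norm. Its restriction to $K'$ vanishes, so its $Y$-norm is bounded uniformly in $d$, yet the corresponding solutions blow up in $\cC^{m,\al}(K')$ as $d\to0$, because the rough trace they induce on $\Ga'$ is only $H^{1/2}$-controlled. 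By the open mapping theorem, $-\De:X\to Y$ cannot be a bounded bijection, so the hypotheses of the implicit function theorem fail for your choice of spaces.

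What your proposal is missing is the paper's key structural device: both function spaces are cut down by a \emph{localization constraint on the transition region}. The paper takes $X:=\setbld{w\in H^1_0(\Om\setminus\ol D)}{\De w\equiv0 \,\tin \Om\setminus(K\cup\ol B),\ w\in\cC^{m,\al}(K')}$ and $Y:=\setbld{h\in H^{-1}(\Om\setminus\ol D)}{h\equiv0 \,\tin \Om\setminus(K\cup\ol B),\ h\in\cC^{m-2,\al}(K')}$. This constraint is compatible with the nonlinear operator precisely because perturbations $\varphi\in\cA$ vanish identically on $\Om\setminus(K^\circ\cup B)$, so $A_\varphi=I$ there; working with $w_\varphi:=v_\varphi-v_0$ (which also homogenizes the boundary data, making $X$ a genuine linear space rather than your affine set), the map $F(\varphi,w)=-\dv(A_\varphi\gr w)-\dv(A_\varphi\gr v_0)$ automatically lands in $Y$ whenever $w\in X$, since $v_0$ is harmonic. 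And it is exactly what rescues invertibility: for $h\in Y$, the solution $w$ is harmonic on the whole transition region, which contains a full neighborhood of $\Ga'$, hence real analytic there, while boundary Schauder estimates handle a collar of $\pa\Om$; the two regions cover $K'$, so $w\in\cC^{m,\al}(K')$ up to and including $\Ga'$. With this modification your plan goes through and becomes, in essence, the paper's proof; without it, no choice of norms on your ``plain splitting'' spaces can make $-\De$ an isomorphism.
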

\begin{proof}
The proof of this Lemma is quite technical but the overall strategy is simple: we just apply Theorem \ref{ift} of page \pageref{ift} to some map $F$ in order to show the smoothness of the auxiliary function $w_\varphi:=v_\varphi-v_0$ in a neighborhood of $\varphi=0\in\cA$ (here $v_0=u_0$ denotes the function $v_\varphi$ corresponding to $\varphi=0$). 

{\bf Step 1: find a functional $F$ such that  $F(\varphi,w_\varphi)=0$.\quad}
First of all, notice that the function $v_\varphi$ is characterized as the unique element of $H^1(\Om\setminus\ol D)$ that satisfies
\begin{equation}\label{v_te eq wk}
\int_{\Om\setminus\ol D} A_\varphi\gr v_\varphi\cdot \gr \psi = 0
\quad\text{ for all }\psi\in H_0^1(\Om), \quad v_\varphi=1 \;\ton\pa D, \quad v_\varphi=0 \;\ton\pa\Om.
\end{equation}
where $J_\varphi$ is the Jacobian of the map $\id+\varphi$ and 
\begin{equation}\label{A li seme}
A_\varphi:=
J_\varphi \left(I+ D\varphi\right)^{-1} ({I}+D\varphi^T)^{-1}.
\end{equation}
This can be proved by explicitly computing the change of variable \eqref{v_varphi} in the weak formulation of $u_\varphi$.
Let now consider $w_\varphi$. By the above,  $w_\varphi$ can be characterized as the unique solution of 
\begin{equation}\label{eq w_varphi}
\int_\Om A_\varphi \gr w_\varphi\cdot\gr\psi +\int_\Om A_\varphi\gr v_0\cdot \gr \psi \quad \text{for all }\psi\in H^1_0(\Om),\quad w_\varphi\in X, 
\end{equation}
where $X$ denotes the Banach space
\begin{equation*}
X:=\setbld{w\in H^1_0(\Om\setminus\ol D)}{\De w \equiv0 \quad \tin \Om\setminus(K\cup \ol B), \quad w\in\cC^{m,\al}(K')},     
\end{equation*}
endowed with the norm $\norm{\cdottone}:=\norm{\cdottone}_{H_0^1(\Om\setminus\ol D)}+\norm{\cdottone}_{\cC^{m,\al}(K')}$.

Let us now consider the following mapping:
\begin{equation}\label{operator F}
F: \cA\times X \ni (\varphi, w) \mapsto -\dv \left(A_\varphi\gr w \right) -\dv \left(A_\varphi \gr v_0 \right) \in  Y,
\end{equation}
where $Y$ denotes the Banach space
\begin{equation*}
Y:=\setbld{h\in H^{-1}(\Om\setminus\ol D)}{h \equiv0 \quad \tin \Om\setminus(K\cup \ol B), \quad w\in\cC^{m-2,\al}(K')},     
\end{equation*}
endowed with the norm $\norm{\cdottone}:=\norm{\cdottone}_{H^{-1}(\Om\setminus\ol D)}+\norm{\cdottone}_{\cC^{m-2,\al}(K')}$.
By \eqref{eq w_varphi}, we have $F(\varphi,w_\varphi)=0$. 

{\bf Step 2: show that $F$ is smooth.\quad}
First, we claim that $F$ is differentiable infinitely many times in a neighborhood of $(0,0)$. As a matter of fact, the map $\cA \ni \varphi\mapsto J_\varphi=\det( { I}+D\varphi)\in L^\infty(\Om)\cap \cC^{m-1,\al}(K')$ is differentiable infinitely many times because also $\varphi\mapsto {I}+D\varphi\in L^\infty(\Om,\RR^{N\times N})\cap \cC^{m-1,\al}(K',\RR^{N\times N})$ is, and the application $\det(\cdottone)$ is a polynomial in its entries and is therefore continuous. 
Similarly, the map $\varphi\mapsto (I+D\varphi)^{-1}$ can be expressed as a Neumann series as $\pp{I+D\varphi}\inv=\sum_{k=0}^\infty(-1)^k (D\phi)^k$ and thus it is $\C^\infty$ in a neighborhood of $0\in \cA$. Therefore, the map $\cA\ni \varphi\mapsto A_\varphi\in L^\infty(\Om,\RR^{N\times N})\cap \cC^{m-1,\al}(K',\RR^{N\times N})$ is also of class $\C^\infty$. Thus, the map $\left(L^\infty(\rn,\RR^{N\times N})\cap \cC^{m-1,\al}(K,\RR^{N\times N})\right)\times X \to H^{-1}(\Om)\cap \cC^{m-2,\al}(K)$ defined by $(A,v) \mapsto -\dv(A \gr v)$ is also of class $\C^\infty$ because both bilinear and continuous. By composition, we conclude that the full map $(\varphi,w)\mapsto F(\varphi,w)$ is of class $\C^\infty$. 

{\bf Step 3: show that $\pa_w F(0,0)$ is a bounded bijection.\quad}
It is easy to see that the partial Fr\'echet derivative with respect to the variable $w$: $\pa_w F(0,0): X\to Y$ is given by the formula $w\mapsto -\De w$. 
In what follows, let us show that the map $X\to Y$ given by $w\mapsto -\De w$ is indeed a bounded bijection as needed by the hypotheses of Theorem \ref{ift}. 
Fix $h\in Y$. Let $w$ denote the unique solution of $-\De w = h$ in $\in H^1_0(\Om\setminus \ol D)$. We will show that $w\in X$. By assumption $w$ is harmonic in $\Om\setminus(K\cup \ol B)$. Take now another compact set $K''$ such that $K\subset K''\subset K'$. By the classical boundary Schauder estimates for the Poisson equation, we get $w\in\cC^{m,\al}(K'')$. Moreover, since $w$ is harmonic (and thus real analytic) in the whole $\Om\setminus(K\cup \ol B)$, in particular $w\in\cC^{m,\al}(K)$ also holds. Since $h\in Y$ was arbitrary, we showed that $\pa_w F(0,0):X\to Y$ is a bijection. Finally, boundedness ensues by the standard regularity theory for the Laplace operator.

{\bf Step 4: apply the implicit function theorem.\quad}
As a consequence of the above, we can apply Theorem \ref{ift} to show the existence of a $\C^\infty$ branch $\phi\mapsto w(\varphi)\in X$ defined for sufficiently small $\varphi\in \cA$ such that $F(\varphi,w(\varphi))=0$. Unique solvability for problem \eqref{eq w_varphi} yields that $w(\varphi)=w_\varphi$. Therefore, we obtain the smoothness of the map $\varphi\mapsto v_\varphi$, as claimed.
\end{proof}
For sufficiently small $\de>0$ let $B$ and $K$ be the sets defined at the beginning of this subsection. Suppose that $\de>0$ is sufficiently small so that $B\cap K=\emptyset$. Now, let $\cF$ and $\cG$ denote the Banach spaces defined in \eqref{FGH}. 
Take any sufficiently small pair $(f,g)$ in $\cF\times\cG$. We will construct a map $\varphi_{f,g}\in\cA$ that verifies \eqref{extending perturbations}. 
Let $\Ga'=\pa K'\setminus \pa\Om_0$ and consider the following two boundary value problems:
\begin{equation}\label{tilde d and g}
\begin{minipage}[h]{0.5\textwidth}
\vspace{0pt}
\begin{equation*}
\begin{cases}
-\De \widetilde d =0\quad &\tin (K')^\circ,\\
\widetilde d = 0 \quad &\ton \Ga',\\
\widetilde d = 1 \quad &\ton \pa\Om.
\end{cases}    
\end{equation*}
\end{minipage}
\hfill
\begin{minipage}[h]{0.4\textwidth}
\vspace{0pt}
\begin{equation*}
 \begin{cases}
-\De \widetilde g =0\quad &\tin (K')^\circ,\\
\widetilde g = 0 \quad&\ton \Ga',\\
\widetilde g = \dfrac{1}{|\gr \widetilde d|}\ g \quad &\ton \pa\Om. 
\end{cases}
\end{equation*}
\end{minipage}    
\end{equation}
Now, for $x\in\ol\Om_0$ set 
\begin{equation*}
\widetilde\varphi_{f,g}(x):=\begin{cases}
f(x)\quad &x\in\ol B,\\
0 \quad & x\in\ol\Om\setminus(\ol B\cup K' ),\\
\widetilde g(x) \gr \widetilde d(x) \quad & x\in K'.
\end{cases}    
\end{equation*}
Let now $\xi\in\cC^\infty([0,\infty), [0,1])$ be a cut off function that verifies 
\begin{equation*}
    \xi \equiv 1 \quad \tin \left[0, \tfrac1 3 \de\right],
    \quad 
    \xi \equiv 0 \quad \tin \left[\tfrac 2 3 \de, \infty\right)
\end{equation*}
and set 
\begin{equation}\label{varphi_fg nuovo}
    \varphi_{f,g}(x):= \widetilde\varphi_{f,g}(x) \; \xi \left( \dist(x,\pa\Om) \right).
\end{equation}
By construction, $\varphi_{f,g}$ is a well-defined element of $\cA$ that satisfies \eqref{extending perturbations} (see also \cite{distance function}). Let now $v_{f,g}$ denote the function \eqref{v_varphi} corresponding to $\varphi=\varphi_{f,g}$.

\begin{lemma}[Fr\'echet differentiability of $v_{f,g}$]\label{frechet diffbility of v_fg}
The map $\cF\times\cG\ni(f,g)\mapsto v_{f,g}\in X$ is Fr\'echet differentiable in a neighborhood of $(f,g)=(0,0)$. 
\end{lemma}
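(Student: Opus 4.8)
The plan is to realize the map $(f,g)\mapsto v_{f,g}$ as a composition and then invoke the chain rule together with Lemma \ref{material differentiability bernu}. By \eqref{v_varphi} and the definition immediately preceding the statement, one has $v_{f,g}=v_{\varphi_{f,g}}$, where $\varphi_{f,g}\in\cA$ is the field built in \eqref{varphi_fg nuovo}. Lemma \ref{material differentiability bernu} already guarantees that $\cA\ni\varphi\mapsto v_\varphi\in H^1(\Om\setminus\ol D)\cap\cC^{m,\al}(K')$ is of class $\cC^\infty$ near $0$. Hence, if I can show that the assignment
$$
T:\cF\times\cG\longrightarrow\cA,\qquad (f,g)\longmapsto\varphi_{f,g}
$$
is a \emph{bounded linear} operator, then $v_{f,g}=v_{T(f,g)}$ is the composition of a bounded linear map (which is automatically $\cC^\infty$, with derivative equal to itself) with the $\cC^\infty$ map of Lemma \ref{material differentiability bernu}, and the chain rule immediately yields that $(f,g)\mapsto v_{f,g}$ is Fr\'echet differentiable (indeed $\cC^\infty$) in a neighborhood of $(0,0)$.

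First I would verify that $T$ is linear. Tracing through \eqref{tilde d and g}--\eqref{varphi_fg nuovo}, the auxiliary function $\widetilde d$ solves a boundary value problem that does not involve $(f,g)$ at all, so $\widetilde d$ and $\gr\widetilde d$ are fixed data. The function $\widetilde g$ is the harmonic extension of $g/\abs{\gr\widetilde d}$ (with zero datum on $\Ga'$), so $g\mapsto\widetilde g$, and hence $g\mapsto\widetilde g\,\gr\widetilde d$, is linear. Since the $f$-component of $\widetilde\varphi_{f,g}$ enters linearly and the cutoff $\xi(\dist(\cdottone,\pa\Om))$ is a fixed multiplier, the full map $(f,g)\mapsto\varphi_{f,g}$ is linear.

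Next I would establish boundedness with respect to the norm $\norm{\varphi}_{W^{1,\infty}(\Om)}+\norm{\varphi}_{\cC^{m,\al}(K')}$ on $\cA$. Because $B$ and $K'$ are disjoint for $\de$ small, the $f$-contribution is supported away from $K'$ and only affects the $W^{1,\infty}$ part, where it is controlled directly by $\norm{f}_{\cC^{0,1}}$. The delicate part is the $g$-contribution $\widetilde g\,\gr\widetilde d$ on $K'$, which I would estimate in $\cC^{m,\al}(K')$ by Schauder theory applied to \eqref{tilde d and g}: since $(K')^\circ$ is an annular region whose boundary $\pa\Om\cup\Ga'$ splits into two disjoint smooth components (so no corner issues arise), and since $\Om$ is of class $\cC^{m+1,\al}$, the extension obeys $\norm{\widetilde g}_{\cC^{m,\al}(\ol{K'})}\lesssim\norm{g/\abs{\gr\widetilde d}}_{\cC^{m,\al}(\pa\Om)}$. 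Here I would invoke the Hopf lemma to bound $\abs{\gr\widetilde d}$ away from zero on $\pa\Om$, so that $1/\abs{\gr\widetilde d}\in\cC^{m,\al}(\pa\Om)$ and multiplication by it is a bounded operation; combined with the smoothness of $\gr\widetilde d$ and of $\xi$, this gives $\norm{\varphi_{f,g}}_{\cC^{m,\al}(K')}\lesssim\norm{g}_{\cC^{m,\al}(\pa\Om)}$.

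The main obstacle is precisely this boundedness estimate for the $g$-part: one must keep careful track of the regularity budget so that the extension lands in $\cC^{m,\al}(K')$ with a bound linear in $\norm{g}$, which is where the hypothesis that $\pa\Om_0$ is of class $\cC^{m+1,\al}$ and the positivity of $\abs{\gr\widetilde d}$ furnished by Hopf are genuinely used. Once $T$ has been shown to be bounded and linear, the conclusion follows by composition with Lemma \ref{material differentiability bernu} as described, giving Fr\'echet differentiability (in fact $\cC^\infty$) of $(f,g)\mapsto v_{f,g}$ near $(0,0)$.
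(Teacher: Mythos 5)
Your proposal is correct and takes essentially the same route as the paper: the paper's proof likewise observes that $(f,g)\mapsto\varphi_{f,g}$ is a continuous linear (the paper says ``bilinear'') map from $\cF\times\cG$ into $\cA$, with boundedness coming from Schauder estimates for the problems in \eqref{tilde d and g}, and then concludes by composition with the $\cC^\infty$ map of Lemma \ref{material differentiability bernu}. Your extra details on the boundedness step (Hopf lemma to keep $\abs{\gr\widetilde d}$ away from zero, the smooth disjoint boundary components of $K'$) merely flesh out what the paper compresses into ``standard Schauder estimates for the Laplace equation.''
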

\begin{proof}
First of all, by the linearity of the two boundary value problems in \eqref{tilde d and g}, we deduce that the map $(f,g)\mapsto \varphi_{f,g}\in \cA$ is bilinear. Moreover, by the standard Schauder estimates for the Laplace equation, there exists some constant $C>0$ (independent of $f$ and $g$) such that 
\begin{equation*}
    \norm{\varphi_{f,g}}\le C\left(\  \norm{f}_{W^{1,\infty}(B,\rn)} + \norm{g}_{\cC^{m,\al}(\pa\Om_0,\RR)}\ \right).
\end{equation*}
That is, $(f,g)\mapsto \varphi_{f,g}$ is a continuous bilinear map from $\cF\times\cG$ to $\cA$ , hence it is Fr\'echet differentiable. The claim now follows from Lemma \ref{material differentiability bernu} by composition. 
\end{proof}
\begin{remark}\label{shape differentiability u_fg}
By \eqref{f' by material deri}, the result above implies the shape differentiability of $u_{f,g}$ at $(0,0)$.
\end{remark}
We finally have all the ingredients to prove Lemma \ref{Frechet diffbility of Psi}.
\begin{proof}[Proof of Lemma \ref{Frechet diffbility of Psi}]
Notice that, by change of variables, we have
\begin{equation}\label{change of variables}
\gr u_{f,g}\circ\left(\id + gn\right)^{-1} =
\left(\id+D\varphi_{f,g}\right)^{-1}\gr v_{f,g} \quad \ton \pa\Om_0
\end{equation}
where $\varphi_{f,g}$ is the map defined by \eqref{varphi_fg nuovo}.
The statement of Lemma \ref{Frechet diffbility of Psi} follows by combining \eqref{Psi}, \eqref{function of value rewritten}, \eqref{change of variables} and Lemma \ref{frechet diffbility of v_fg}. 
\end{proof}

\subsection{For Problem 2}
As done in the previous subsection, the result will be given under a fairly general geometrical setting. The proofs will be omitted altogether since they follow almost verbatim from those in the previous subsection. 
Let $\Om\subset\rn$ be a bounded domain of class $\cC^{m+1,\al}$ and let $D\subset\ol D\subset \Om$ be a measurable set. For small enough $\de>0$, define the sets $K$, $K'$ and $B$ as in \eqref{K K' B} and define $\cA$ as in \eqref{A}.
For $\varphi\in\cA$ small enough, let $D_\varphi$ and $\Om_\varphi$ be the sets defined in \eqref{Om_phi}. Moreover, for $\varphi\in\cA$ small enough, let $u_\varphi$ denote the solution to the boundary value problem
\begin{equation}\label{serrin 2}
    \begin{cases}
    -\dv\pp{\sg_\varphi \gr u_\varphi} = 1 \quad &\text{in }\Om_\varphi,\\
    u_\varphi=0 \quad &\text{on }\pa \Om_\varphi,
    \end{cases}
\end{equation}
with $\sg_\varphi:= \sg_c\cX_{D_\varphi}+\cX_{\Om_\varphi\setminus D_\varphi}$.

By the classical regularity theory for elliptic operators in divergence form and the Schauder boundary estimates for the Laplace operator (\cite{GT}), $u_\varphi$ belongs to 
\begin{equation*}
    H^1_0(\Om_\varphi)\cap \cC^{m,\al}(\Om_\varphi\cap W),
\end{equation*}
where $W$ is any open neighborhood of $\pa\Om_\varphi$ that does not intersect $\pa D_\varphi$. 
Moreover, if $\norm{\varphi}$ is small enough, 
the function
\begin{equation*}
v_\varphi:=u_\varphi\circ\restr{\left( {\rm Id}+\varphi \right)}{\Om}    
\end{equation*}
is a well defined element of $H_0^1(\Om)\cap \cC^{m,\al}(K')$. 
As done in Lemma \ref{material differentiability bernu}, we can obtain the smoothness of $w_\varphi:=v_\varphi-v_0$ in the $X$-norm in a neighborhood of $\varphi=0\in\cA$. The shape differentiability of $u_\varphi$ and the Fr\'echet differentiability of $\Psi$ (defined as in \eqref{Psi}) then follow immediately. 

\section*{Acknowledgements}
The author would like to thank Prof. Antoine Henrot (Institut Elie Cartan de Lorraine and Universit\'e de Lorraine) for a fruitful discussion concerning the perturbation of convex sets discussed in Subsection \ref{subs convexity}.

\begin{small}

\end{small}

\bigskip

\noindent
\textsc{
Mathematical Institute, Tohoku University, Aoba, 
Sendai 980-8578, Japan } \\
\noindent
{\em Electronic mail address:}
cavallina.lorenzo.e6@tohoku.ac.jp

\end{document}